\newtheorem{Theorem}{Theorem}[section]
\newtheorem{Lemma}[Theorem]{Lemma}
\newtheorem{Proposition}[Theorem]{Proposition}
\newtheorem{Definition}[Theorem]{Definition}
\newtheorem{Remark}[Theorem]{Remark}
\def\C {\mathbb C}
\def\R {\mathbb R}
\newcommand{\<}{\langle}
\renewcommand{\>}{\rangle}
\renewcommand{\(}{\left(}
\renewcommand{\)}{\right)}
\newcommand{\p}{\partial}
\newcommand{\Vol}{\operatorname{Vol}}
\newcommand{\sgn}{\operatorname{sgn}}
\newcommand{\id}{\operatorname{Id}}
\begin{document}
\title[Range of the attenuated magnetic ray transform]{On the range of the attenuated magnetic ray transform for connections and Higgs fields}

\author[Gareth Ainsworth]{Gareth Ainsworth}
\address{Trinity College, Cambridge, CB2 1TQ, United Kingdom}
\email{gtrain137@gmail.com}

\author[Yernat M. Assylbekov]{Yernat M. Assylbekov}
\address{University of Washington, Department of Mathematics, Seattle, WA 98195-4350, USA}
\email{y\_assylbekov@yahoo.com}

\begin{abstract}
For a two-dimensional simple magnetic system, we study the attenuated magnetic ray transform $I_{A,\Phi}$, with attenuation given by a unitary connection $A$ and a skew-Hermitian Higgs field $\Phi$. We give a description for the range of $I_{A,\Phi}$ acting on $\C^n$-valued tensor fields.
\end{abstract}

\maketitle

\section{Introduction}
\subsection{Magnetic flows}
Consider a compact oriented Riemannian manifold $(M,g)$ with boundary. Let
$\pi:TM\to M$ denote the canonical projection,
$\pi:(x,v)\mapsto x$,
$x\in M$, $v\in T_xM$.
Denote by $\omega_0$ the canonical symplectic form on $TM$, which
is obtained by pulling back the canonical symplectic form of $T^*M$
via the Riemannian metric.
Let $H:TM\to\mathbb R$ be defined by
$$
H(x,v)=\frac 12|v|^2_{g(x)},\quad (x,v)\in TM.
$$
The Hamiltonian flow of $H$ with respect to $\omega_0$
gives rise to the geodesic flow of $(M,g)$.
Let $\Omega$ be a closed $2$-form on $M$ and consider
the new symplectic form $\omega$ defined as
$$
\omega=\omega_0+\pi^*\Omega.
$$
The Hamiltonian flow of $H$ with respect to $\omega$ gives rise to
the {\it\bfseries magnetic geodesic flow}
$\varphi_t:TM\to TM$.
This flow models the motion of
a unit charge of unit mass in a magnetic field
whose {\it\bfseries Lorentz force} $Y:TM\to TM$
is the bundle map uniquely determined by
$$
\Omega_x(\xi,\eta)=\langle Y_x(\xi),\eta\rangle
$$
for all $x\in M$ and $\xi,\eta\in T_xM$. Every trajectory of the magnetic flow
is a curve on $M$ called a {\it\bfseries magnetic geodesic}. Magnetic geodesics satisfy Newton's law of motion
\begin{equation}\label{m-geodesic}
\nabla_{\dot\gamma}\dot\gamma=Y_\gamma(\dot\gamma).
\end{equation}
Here $\nabla$ is the Levi-Civita connection of $g$. The triple $(M,g,\Omega)$ is said to be a {\it\bfseries magnetic system}. In the absence of a magnetic field, that is $\Omega=0$, we recover the ordinary geodesic flow
and ordinary geodesics. Note that magnetic geodesics are not time reversible, unless $\Omega=0$. Note also that magnetic geodesics have constant speed, and hence are restricted to a specific energy level - we will consider the magnetic flow on the unit sphere bundle $SM=\{(x,v)\in TM:|v|=1\}$. This is not a restriction at all from a dynamical point of view, since other energy levels may be understood by simply changing $\Omega$ to $c \Omega$, where $c\in\R$.

Magnetic flows were first considered in \cite{AS, Ar} and it was shown in \cite{ArG, Ko, N1, N2, NS, PP} that they are related to dynamical systems, symplectic geometry, classical mechanics and mathematical mechanics. Inverse problems related to magnetic flows were studied in \cite{Ain,DPSU,DU}

Let $\Lambda$ stand for the second fundamental form of $\partial M$
and $\nu(x)$ for the inward unit normal to $\partial M$ at $x$.
We say that $\partial M$ is {\em\bfseries strictly magnetic convex}
if
\begin{equation}\label{strict-conv}
\Lambda(x,\xi)>\langle Y_x(\xi),\nu(x)\rangle
\end{equation}
for all $(x,\xi)\in S(\partial M)$.
Note that if we replace $\xi$ by $-\xi$, we can put an absolute
value in the right-hand side of \eqref{strict-conv}.
In particular, magnetic convexity is stronger
than the Riemannian analogue.

For $x\in M$, we define the {\em\bfseries magnetic
exponential map} at $x$ to be the partial map
$\exp^\mu_x:T_xM\to M$ given by
$$
\exp^\mu_x(t\xi)=\pi\circ\varphi_t(\xi),\quad t\ge 0,\ \xi\in S_xM.
$$
In \cite{DPSU} it is shown that, for every $x\in M$,
$\exp^\mu_x$  is a $C^1$-smooth partial map on $T_xM$
which is $C^\infty$-smooth on $T_xM\setminus\{0\}$.

\begin{Definition}{\rm
We say that a magnetic system $(M,g,\Omega)$ is a {\em\bfseries simple magnetic system} if $\partial M$ is strictly magnetic convex and the magnetic exponential map
$\exp^\mu_x:(\exp^\mu_x)^{-1}(M)\to M$
is a diffeomorphism for every $x\in M$.}
\end{Definition}

In this case $M$ is diffeomorphic to the unit ball of Euclidean space, and therefore $\Omega=d\alpha$ for some 1-form $\alpha$ on $M$. We will denote such a simple magnetic system by $(M,g,\alpha)$. This definition is a generalization of the notion of a simple Riemannian manifold. The latter naturally arose in the context of the boundary rigidity problem \cite{Mi}. Throughout this paper we only consider simple magnetic systems.

\subsection{Attenuated magnetic ray transform for a unitary connection and Higgs field}
We consider a unitary connection and a skew-Hermitian Higgs field on the trivial bundle $M\times \C^n$. We define a {\it\bfseries unitary connection} as a matrix-valued smooth map $A:TM\to \mathfrak{u}(n)$ which for fixed $x\in M$ is linear in $v\in T_x M$, and define a {\it\bfseries skew-Hermitian Higgs field} as a matrix-valued smooth map $\Phi:M\to \mathfrak{u}(n)$. The connection $A$ induces a covariant derivative which acts on sections of $M\times \C^n$ by $d_A:=d+A$. Saying $A$ is unitary means the following holds for the inner product of sections $s_{1},s_{2}$ of $M\times \C^n$
$$
d(s_1,s_2)=(d_As_1,s_2)+(s_1,d_A s_2).
$$
Pairs of unitary connections and skew-Hermitian Higgs fields $(A,\Phi)$ are very important in the Yang-Mills-Higgs theories, since they correspond to the most popular structure groups $U(n)$ or $SU(n)$, see \cite{Dun,HSW,MS,MW}.

On the boundary of $M$, we consider the set of inward and outward unit vectors defined as
\begin{align*}
\p_+ SM&=\{(x,v)\in SM:x\in\p M,\langle v,\nu(x)\rangle\ge0\},\\
\p_- SM&=\{(x,v)\in SM:x\in\p M,\langle v,\nu(x)\rangle\le0\},
\end{align*}
where $\nu$ is the unit inner normal to $\p M$. The magnetic geodesics entering $M$ can be parametrized by $\p_+SM$. We say that a magnetic system $(M,g,\Omega)$ is {\it\bfseries non-trapping} if for any $(x,v)\in SM$ the time $\tau_{+}(x,v)$ when the magnetic geodesic $\gamma_{x,v}$, with $x=\gamma_{x,v}(0)$, $v=\dot\gamma_{x,v}(0)$, exits $M$ is finite.  In particular, simple magnetic systems are non-trapping \cite{DPSU}.

Let $\mathbf G_\mu$ denote the generating vector field of the magnetic flow $\varphi_t$. Given $f\in C^\infty(SM,\C^n)$, consider the following transport equation for $u:SM\to \C^n$
$$
\mathbf G_\mu u+Au+\Phi u=-f\text{ in }SM,\quad u\big|_{\p_- SM}=0.
$$
Here $A$ and $\Phi$ act on functions on $SM$ by matrix multiplication. This equation has a unique solution $u^f$, since on any fixed magnetic geodesic the transport equation is a linear system of ODEs with zero initial condition.
\begin{Definition}{\rm
The \emph{\bfseries attenuated magnetic ray transform} of $f\in C^\infty(SM,\C^n)$, with attenuation determined by a unitary connection $A:TM\to \mathfrak u(n)$ and a skew-Hermitian Higgs field $\Phi:M\rightarrow\mathbb{C}^{n}$, is given by
$$
I_{A,\Phi} f:=u^f\big|_{\p_+ SM}.
$$}
\end{Definition}

It is clear that a general function $f\in C^\infty(SM,\C^n)$ cannot be determined by its attenuated magnetic ray transform, since $f$ depends on more variables than $I_{A,\Phi} f$. Moreover, one can easily see that the functions of the following type are always in the kernel of $I_{A,\Phi}$
\begin{equation}\label{general kernel}
(\mathbf G_\mu+A+\Phi)u,\quad u\in C^\infty(SM,\C^n),\ u\big|_{\p(SM)}=0.
\end{equation}
However, in applications one often needs to invert the transform $I_{A,\Phi}$ acting on functions on $SM$ arising from symmetric tensor fields. Further, we will consider this particular case.

Let $f=f_{i_1\cdots\,i_m}dx^{i_1}\otimes\cdots\otimes dx^{i_m}$ be a $\C^n$-valued, smooth symmetric $m$-tensor field on $M$. Then a tensor field induces a smooth function $f_m\in C^\infty(SM,\C^n)$ by
$$
f_m(x,v)=f_{i_1\cdots\,i_m}(x)\,v^{i_1}\cdots v^{i_m}.
$$
We denote by $C^\infty(S_m(M),\C^n)$ the bundle of smooth $\C^n$-valued, (covariant) symmetric $m$-tensor fields on $M$. When $m=1$, we also use the notation $C^\infty(\Lambda^1(M),\C^n)$. By $I_{A,\Phi}^m$ we denote the following operator
$$
I^m_{A,\Phi}f:=I_{A,\Phi}f_m,\quad f\in C^\infty(S_m(M),\C^n).
$$
We will frequently identify the tensor field $f\in C^\infty(S_m(M),\C^n)$ with the corresponding function $f_m\in C^\infty(SM,\C^n)$. Note that $I^{0}_{A,\Phi}$ has domain $C^{\infty}(M,\mathbb{C}^{n})$.

\subsection{Structure of the paper}
In this paper we give a description for the functions in $C^\infty(\p_+ SM,\C^n)$ which are in the range of $I_{A,\Phi}$. For the description we use the following boundary data: the scattering relation (see Section~\ref{2.1}), the scattering data of the pair $(A,\Phi)$ (see Section~\ref{2.2}) and the fibrewise Hilbert transform at the boundary (see Section~\ref{2.3}). The structure of the paper is as follows. In Section~\ref{2} we recall some facts and definitions from \cite{Ain,PSU} that will be used in our paper. A description of the range of the ray transform acting on higher order tensors is given in Section~\ref{3}, based on Theorem~\ref{main}. In Section~\ref{4} we discuss the surjectivity properties of the adjoint of the attenuated ray transform. Finally, in Section~\ref{5} we give the proof of Theorem~\ref{main}.


\section{Preliminaries}\label{2}
\subsection{Scattering relation}\label{2.1}
Let $(M,g,\alpha)$ be a simple magnetic system. For $(x,\xi)\in SM$, we denote by $\gamma_{x,\xi}$ the magnetic geodesic on $M$ such that $\gamma_{x,\xi}(0)=x$,
$\dot\gamma_{x,\xi}(0)=\xi$. By $\tau_+(x,\xi)$ and $\tau_-(x,\xi)$ we denote the nonnegative and nonpositive times, respectively, when $\gamma_{x,\xi}$ exits $M$. Simplicity of $(M,g,\alpha)$ implies that $\tau_+$ and $\tau_-$ are continuous on $SM$ and smooth on $SM\setminus S(\p M)$. Furthermore, from \cite[Lemma~2.3]{DPSU} we also have that $\tau_+\big|_{\p_+ SM}$ and $\tau_-\big|_{\p_- SM}$ are smooth.

The {\it\bfseries scattering relation} $\mathcal S:\p_+ SM\to\p_-SM$ is defined as
$$
\mathcal S(x,\xi):=(\varphi_{\tau_+(x,\xi)}(x,\xi))=(\gamma_{x,\xi}(\tau_+(x,\xi)),\dot\gamma_{x,\xi}(\tau_+(x,\xi))).
$$
From the above comments on $\tau_+$, we conclude that the scattering relation $\mathcal S$ is a smooth map.

For a given $w\in C^\infty(\p_+SM,\C^n)$, the transport equation
$$
\mathbf G_\mu u=0\text{ in }SM,\quad u\big|_{\p_+SM}=w
$$
has the solution $u=w_\psi:=w\circ\mathcal S^{-1}\circ\psi$ where $\psi:SM\to\p_-SM$ is the end point map $\psi(x,\xi):=\varphi_{\tau_+(x,\xi)}(x,\xi)$.

\subsection{Scattering data of a unitary connection and skew-Hermitian Higgs field}\label{2.2}
Let $U_{A,\Phi}:SM\to U(n)$ be the unique solution of the transport equation
$$
\mathbf G_\mu U_{A,\Phi}+(A+\Phi)U_{A,\Phi}=0\text{ in }SM,\quad U_{A,\Phi}\big|_{\p_+ SM}=\id.
$$
The map $C_{A,\Phi}:\p_-SM\to U(n)$ defined by $C_{A,\Phi}:=U_{A,\Phi}\big|_{\p_- SM}$ is called the {\it\bfseries scattering data} of the pair $(A,\Phi)$. If $G:M\to U(n)$ is a smooth map such that $G\big|_{\partial M}=\id$, then it is not difficult to check that the pair $(G^{-1}dG+G^{-1}AG,\,G^{-1}\Phi G)$ has the same scattering data. It was proved in \cite{Ain} that $(A,\Phi)$ can be determined by the scattering data $C_{A,\Phi}$ up to such a gauge equivalence.

Now, for a given $w\in C^\infty(\p_+ SM,\C^n)$ consider the unique solution $w^\sharp:SM\to \C^n$ to the transport equation
$$
\mathbf G_\mu w^\sharp+Aw^\sharp+\Phi w^\sharp=0\text{ in }SM,\quad w^\sharp\big|_{\p_+ SM}=w.
$$
Observe that $w^\sharp(x,v)=U_{A,\Phi}(x,v)w_\psi(x,v)$. Using the scattering relation $\mathcal S$ and the scattering data $C_{A,\Phi}$, we introduce the operator
$$
Q_{A,\Phi}:C(\p_+SM,\C^n)\to C(\p(SM),\C^n)
$$
defined by
$$
Q_{A,\Phi}w(x,v):=\begin{cases}
w(x,v)\quad&(x,v)\in\p_+SM,\\
C_{A,\Phi}(x,v)\,(w\circ\mathcal S^{-1})(x,v)\quad&(x,v)\in\p_-SM.
\end{cases}
$$
Then clearly $w^\sharp\big|_{\p(SM)}=Q_{A,\Phi} w(x,v)$. The space of those $w$ for which $w^\sharp$ is smooth in $SM$ is denoted by
$$
\mathcal S_{A,\Phi}^\infty(\p_+SM,\C^n)=\{w\in C^\infty(\p_+ SM,\C^n):w^\sharp\in C^\infty(SM,\C^n)\}.
$$
This space was characterized in \cite[Lemma~4.2]{Ain} in terms of the operator $Q_{A,\Phi}$ as follows:
$$
\mathcal S_{A,\Phi}^\infty(\p_+SM,\C^n)=\{w\in C^\infty(\p_+ SM,\C^n):Q_{A,\Phi}w\in C^\infty(\partial(SM),\C^n)\}.
$$

Using the fundamental solution $U_{A,\Phi}$, we can also give an integral expression for the ray transform. Recall $I_{A,\Phi}f:=u^{f}\big|_{\partial_{+}SM}$ where $u^{f}$ is the unique solution to
$$
\mathbf G_\mu u+Au+\Phi u=-f\text{ in }SM,\quad u|_{\p_- SM}=0.
$$
Note that $U_{A,\Phi}^{-1}$ solves $\mathbf G_\mu U_{A,\Phi}^{-1}-U_{A,\Phi}^{-1}(A+\Phi)=0$. Therefore, $\mathbf G_\mu(U_{A,\Phi}^{-1}u^{f})=-U_{A,\Phi}^{-1}f$. Integrating from $0$ to $\tau_{+}(x,v)$ for $(x,v)\in \partial_{+}SM$ we obtain the following expression
$$
u^{f}(x,v)=\int_0^{\tau_{+}(x,v)}U^{-1}_{A,\Phi}(\varphi_t(x,v))f(\varphi_t(x,v))\,dt.
$$

\subsection{Geometry and Fourier analysis on \texorpdfstring{$SM$}{SM}}
Since $M$ is assumed to be oriented there is a circle action on the fibres of $SM$ with infinitesimal generator $V$ called the vertical vector field. Let $X$ denote the generator of the geodesic flow of $g$. We complete $X,V$ to a global frame of $T(SM)$ by defining the vector field $X_\perp:=[V,X]$, where $[\cdot,\cdot]$ is the Lie bracket for vector fields. It is easy to see that the generator of magnetic flow $\varphi_t$ can be expressed in terms of the global frame $(X,X_\perp,V)$ in the following form
$$
\mathbf G_\mu=X+\lambda V,
$$
where $\lambda$ is the unique function satisfying $\Omega=\lambda d\Vol_g$ with $d\Vol_g$ being the area form of $M$. 

For any two functions $u,v:SM\to\C^n$ define an inner product:
$$
\langle u,v\rangle=\int_{SM} (u,v)_{\C^n}\,d\Sigma^3,
$$
where $d\Sigma^3$ is the Liouville measure of $g$ on $SM$. The space $L^2(SM,\C^n)$ decomposes orthogonally as a direct sum
$$
L^2(SM,\C^n)=\bigoplus_{k\in\mathbb Z}E_k
$$
where $E_k$ is the eigenspace of $-iV$ corresponding to the eigenvalue $k$. Any function $u\in C^{\infty}(SM,\C^n)$ has a Fourier series expansion
$$
u=\sum_{k=-\infty}^{\infty}u_k,
$$
where $u_k\in \Omega_k:=C^{\infty}(SM,\C^n)\cap E_k$. A function on $SM$ whose Fourier coefficients are trivial whenever $\left|k\right|> m$ is said to be of {\it\bfseries degree} $m$.

\subsection{Some elliptic operators of Guillemin and Kazhdan}
Now we introduce the following first order elliptic operators due to Guillemin and Kazhdan \cite{GK}
$$
\eta_+,\eta_-:C^\infty(SM,\C^n)\to C^\infty(SM,\C^n)
$$
defined by
$$
\eta_+:=\frac{1}{2}(X+iX_\perp),\quad \eta_-:=\frac{1}{2}(X-iX_\perp).
$$
By the commutation relations $[-iV,\eta_+]=\eta_+$ and $[-iV,\eta_-]=-\eta_-$ we see that
$$
\eta_+:\Omega_k\to\Omega_{k+1},\quad\eta_-:\Omega_k\to\Omega_{k-1}.
$$
We will use these operators in the last two sections.

\subsection{Fibrewise Hilbert transform}\label{2.3}
An important tool in our approach is the {\it\bfseries fibrewise Hilbert transform} $\mathcal H:C^\infty(SM,\C)\to C^\infty(SM,\C)$, which we define in terms of Fourier coefficients as
$$
\mathcal H(u_k)=-\sgn(k)iu_k,
$$
where we use the convention $\sgn(0)=0$. Moreover, $H(u)=\sum_k\mathcal H(u_k)$. Note that
$$
(\id+i\mathcal H)u=u_0+2\sum_{k=1}^\infty u_k,\quad
(\id-i\mathcal H)u=u_0+2\sum_{k=-\infty}^{-1} u_k.
$$

The following commutator formula, which was derived by Pestov and Uhlmann in \cite{PU1} and generalized in \cite{Ain,PSU}, will play an important role.
\begin{equation}\label{[H,X]}
[\mathcal H,\mathbf G_\mu+A+\Phi]u=(X_\perp+\star A)u_0+\((X_\perp+\star A)u\)_0,\quad u\in C^\infty(SM,\C).
\end{equation}
This formula has been frequently used in recent works on inverse problems, see \cite{PSU2,PSU1,PSU,PU1,PU,SaU}.

\subsection{Range description}
Given $w\in C^\infty(\p_+ SM,\C^n)$ we define $w^\sharp$ to be the unique solution to transport equation
$$
\mathbf G_\mu w^\sharp+Aw^\sharp+\Phi w^\sharp=0,\quad w^\sharp|_{\p_+ SM}=w.
$$
Moreover, we define $\mathcal S^{\infty}_{A,\Phi}(\p_+ SM,\C^n)$ to be the set of all $w\in C^\infty(\p_+ SM,\C^n)$ such that $w^\sharp$ is smooth.

We introduce the operator $B_{A,\Phi}:C(\p(SM),\C^n)\to C(\p_+SM,\C^n)$ defined by
$$
B_{A,\Phi}a:=[(C_{A,\Phi}^{-1}a)\circ \mathcal S-a]\big|_{\p_+ SM}.
$$
Let $a$ be a smooth function and $f=(\mathbf G_\mu+A+\Phi)a$. Suppose that $u^f$ solves
$$
\mathbf G_\mu u+(A+\Phi)u=-f
$$
with $u\big|_{\p_- SM}=0$. Then clearly $\mathbf G_\mu(u^f+a)+(A+\Phi)(u^f+a)=0$. Since $(u^f+a)\big|_{\p_- SM}=a\big|_{\p_- SM}$ we deduce that
$$
I_{A,\Phi}((\mathbf G_\mu+A+\Phi)a)=u^f\big|_{\p_+ SM}=[(C_{A,\Phi}^{-1}a)\circ\mathcal S-a]\big|_{\p_+ SM}=B_{A,\Phi}(a\big|_{\p(SM)}).
$$

Next we introduce the operator $P:\mathcal S_{A,\Phi}^\infty(\p_+ SM,\C^n)\to C^\infty(\p_+ SM,\C^n)$ defined by $P_{A,\Phi}:=B_{A,\Phi}\mathcal H Q_{A,\Phi}$. Clearly the operator $P$ is completely determined by the scattering relation $\mathcal S$ and scattering data $C_{A,\Phi}$.

Recall a connection $A$ induces an operator $d_A$, acting on $\C^n$-valued differential forms on $M$ by the formula $d_A\alpha=d\alpha+A\wedge\alpha$. By $d_A^*$ we denote the dual of $d_A$ with respect to $L^2$-norm on the space of forms. Then it is not difficult to check that $d^*_A=-\star d_A\star$. We use the notation $\mathfrak H_A$ for the space of all 1-forms $\eta$ with $d_A\eta=d_A^*\eta=0$ and $\jmath^*\eta=0$ where $\jmath:\p M\to M$ is the inclusion map. The elements of this space are called {\it\bfseries $A$-harmonic forms}. Note that $\mathfrak H_A$ is a finite dimensional space, since the equations defining $\mathfrak H_A$ are an elliptic system with regular boundary condition, see \cite[Section~5.11]{T}. Since $M$ is a disk, we have $\mathfrak H_A=0$ whenever $A=0$.

We can now state our main result.

\begin{Theorem}\label{main}
Let $(M,g,\alpha)$ be a two-dimensional simple magnetic system, $A$ a unitary connection and $\Phi$ a skew-Hermitian Higgs field. Then a function $u\in C^\infty(\p_+ SM,\C^n)$ belongs to the range of $I^0_{A,\Phi}+I^1_{A,\Phi}$ if and only if
$$
u=P_{A,\Phi}w+I^1_{A,\Phi}\eta
$$
for some $w\in\mathcal S_{A,\Phi}^\infty(\p_+ SM,\C^n)$ and for some $\eta\in\mathfrak H_A$.
\end{Theorem}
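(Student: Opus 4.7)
I prove the two directions separately. For \emph{sufficiency}, the summand $I^1_{A,\Phi}\eta$ is trivially in the range. For $P_{A,\Phi}w$ with $w\in\mathcal{S}^\infty_{A,\Phi}$, the function $a:=\mathcal{H}w^\sharp$ is smooth on $SM$, so the identity $I_{A,\Phi}((\mathbf G_\mu+A+\Phi)a)=B_{A,\Phi}(a|_{\partial SM})$ established earlier in the paper, together with the fibrewise nature of $\mathcal{H}$ (which gives $(\mathcal{H}w^\sharp)|_{\partial SM}=\mathcal{H}(w^\sharp|_{\partial SM})=\mathcal{H}Q_{A,\Phi}w$), yields $P_{A,\Phi}w=I_{A,\Phi}((\mathbf G_\mu+A+\Phi)\mathcal{H}w^\sharp)$. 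Since $(\mathbf G_\mu+A+\Phi)w^\sharp=0$, the commutator formula \eqref{[H,X]} simplifies the integrand to $-(X_\perp+\star A)(w^\sharp)_0-((X_\perp+\star A)w^\sharp)_0$, a sum of a Fourier degree-$1$ term (a $1$-form on $M$) and a degree-$0$ term (a $\C^n$-valued function on $M$). Hence $P_{A,\Phi}w$ lies in the range of $I^0_{A,\Phi}+I^1_{A,\Phi}$.

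For \emph{necessity}, let $u=I^0_{A,\Phi}h+I^1_{A,\Phi}\theta=I_{A,\Phi}f$ with $f=h+\theta$ of Fourier degree $\le 1$, and let $u^f$ be the smooth transport solution. Applying \eqref{[H,X]} to $u^f$ gives
\[
(\mathbf G_\mu+A+\Phi)(-\mathcal{H}u^f) = \mathcal{H}f + (X_\perp+\star A)(u^f)_0 + ((X_\perp+\star A)u^f)_0 =: F,
\]
which is smooth on $SM$. I then let $p$ be the unique smooth solution of $(\mathbf G_\mu+A+\Phi)p=F$ with $p|_{\partial_+SM}=0$; smoothness of $p$ on all of $SM$ follows from simplicity of the magnetic system (smoothness of $\tau_+|_{\partial_+SM}$ and of the flow). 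Then $w^\sharp:=-\mathcal{H}u^f-p$ is smooth and solves the homogeneous equation $(\mathbf G_\mu+A+\Phi)w^\sharp=0$, so $w:=w^\sharp|_{\partial_+SM}$ belongs to $\mathcal{S}^\infty_{A,\Phi}(\partial_+SM,\C^n)$.

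A direct computation using the sufficiency formula shows
\[
u-P_{A,\Phi}w = I_{A,\Phi}\bigl(f + (X_\perp+\star A)(w^\sharp)_0 + ((X_\perp+\star A)w^\sharp)_0\bigr) =: I_{A,\Phi}(\eta'),
\]
and it remains to identify this as $I^1_{A,\Phi}(\eta)$ for some $\eta\in\mathfrak{H}_A$. Using that $(\mathbf G_\mu+A+\Phi)q=d_Aq+\Phi q$ for $q\in C^\infty(M,\C^n)$ with $q|_{\partial M}=0$ lies in the kernel of $I^0_{A,\Phi}+I^1_{A,\Phi}$, I would exploit this gauge freedom, solving a suitable elliptic boundary-value problem for $q$, to eliminate the degree-$0$ part of $\eta'$ and arrange the remaining $1$-form part to be $d_A$-closed, $d_A^*$-closed and tangential-trace-free, placing it in $\mathfrak{H}_A$. \emph{The main obstacle} is precisely this last reduction: carrying out a Hodge-type decomposition for $\C^n$-valued $1$-forms in the presence of both the unitary connection $A$ and the Higgs field $\Phi$ (so the gauge increment is the coupled $d_Aq+\Phi q$ rather than $d_Aq$ alone), together with verifying $A$-harmonicity of the residual using the explicit structure of $\eta'$ and the ellipticity of the boundary-value problem defining $\mathfrak{H}_A$ noted in the paper.
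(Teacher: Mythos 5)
Your sufficiency argument is essentially the paper's: applying \eqref{[H,X]} to $\mathcal H w^\sharp$ shows $P_{A,\Phi}w=I_{A,\Phi}((\mathbf G_\mu+A+\Phi)\mathcal Hw^\sharp)$ with integrand of degree $\le 1$, which the paper packages as the identity \eqref{final-equation}. The necessity direction, however, has a genuine gap, and it sits exactly where you place your ``main obstacle.'' There are two problems. First, you assert that $u^f$ is smooth on $SM$; in general it is smooth only on $SM\setminus S(\p M)$, since the integral formula for $u^f$ involves $\tau_+$, which fails to be smooth at the glancing region. So $w:=(-\mathcal Hu^f-p)|_{\p_+SM}\in\mathcal S^\infty_{A,\Phi}$ is not justified (this regularity issue is the very reason the class $\mathcal S^\infty_{A,\Phi}$ is introduced). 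Second, and more seriously, the final reduction cannot be achieved by the gauge terms $(\mathbf G_\mu+A+\Phi)q=d_Aq+\Phi q$ alone. Your residual integrand is a degree-$0$ function plus the $1$-form $f_{-1}+f_1+\star d_A(w^\sharp)_0$. By Lemma~\ref{final2}(a) a general $1$-form decomposes as $d_Ap+\star d_Aa+\eta$ with $\eta\in\mathfrak H_A$; the piece $d_Ap$ (with $\Phi p$ in degree $0$) is killed by $I_{A,\Phi}$, but the co-exact piece $\star d_Aa$ is neither annihilated by $I^1_{A,\Phi}$ nor $A$-harmonic, and no choice of $q$ vanishing on $\p M$ converts it into $d_Aq$ plus an element of $\mathfrak H_A$. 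Since your $w^\sharp$ is completely pinned down by $u^f$, there is no freedom left with which to remove this piece, so the residue will in general not be $I^1_{A,\Phi}$ of an $A$-harmonic form.

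The paper resolves this by going in the opposite direction: rather than constructing $w$ from $u^f$ and hoping the residue lands in $\mathfrak H_A$, it chooses $w$ a posteriori so that $P_{A,\Phi}(w)$ absorbs everything except the harmonic part. Concretely, writing the $1$-form part of $f$ as $d_Ap+\star d_Aa+\eta$, discarding $I_{A,\Phi}(d_Ap+\Phi p)=0$, and using Lemma~\ref{final2}(b) to produce a $1$-form $\beta$ with $\star d_A\beta=f_0-\Phi p$ and $d_A^*\beta=2\Phi a$, one invokes Theorem~\ref{surjectivity of I_A^*} --- the \emph{simultaneous} surjectivity of $(I^0_{A,\Phi})^*$ and $(I^1_{A,\Phi})^*$ on $\mathcal S^\infty_{A,\Phi}(\p_+SM,\C^n)$ subject to the compatibility condition $d_A^*\omega=2\Phi f$ --- to find $w$ with $(I^0_{A,\Phi})^*w=a$ and $(I^1_{A,\Phi})^*w=\beta$; then \eqref{final-equation} gives $u=I^1_{A,\Phi}(\eta)-2\pi P_{A,\Phi}(w)$. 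That surjectivity theorem (proved via the normal operator $\tilde N^{0,1}_{A,\Phi}$, its ellipticity, and a functional-analytic duality argument) is the paper's main technical input, and it is precisely the ingredient your proposal is missing.
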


Theorem~\ref{main} was proved by Paternain, Salo and Uhlmann \cite{PSU2} in the case of the geodesic flow. This was used to give a description of the range of unattenuated geodesic ray transform acting on higher order tensors. For the characterization of the range of the ray transform on higher order tensors our main result is stated in Section~\ref{3}: Theorem~\ref{main2-tensor}.\\
\indent The crucial difficulty when one deals with a magnetic field or Higgs field is the fact that the concomitant transport equation couples different Fourier components. Even if one restricts oneself to the geodesic case, by adding a Higgs field this difficulty already presents an obstacle to the approach of \cite{PSU2}. The key idea that is utilized to overcome this, and which represents the major contribution of this paper, is a result on the ``simultaneous'' surjectivity of the adjoints of the ray transform $I^{0}_{A,\Phi}$ and $I^{1}_{A,\Phi}$ - this is made precise in Theorem~\ref{surjectivity of I_A^*}. This theorem relies upon considering the ray transform restricted to domains on which it is genuinely injective, namely $\Omega_{0}\oplus\Omega_{1}$ and $\Omega_{-1}\oplus\Omega_{0}$, and then patching solutions together. One should not directly consider the ray transform restricted to $\Omega_{-1}\oplus\Omega_{0}\oplus\Omega_{1}$ because the kernel of the ray transform on this domain has a natural obstruction, and in particular, is not identically $0$.

\section{Range characterization for higher order tensors}\label{3}
Let $\kappa$ denote the canonical line bundle of $M$, whose complex structure is that induced by its metric $g$. For $k\in\mathbb{N}$ we denote by $\Gamma(M,\kappa^{\otimes k})$ the set of $k$-th tensor power of canonical line bundle. It was explained in \cite[Section~2]{PSU3} the set $\Gamma(M,\kappa^{\otimes k})$ can be identified with $\Omega_k$. Roughly speaking, for a given $\xi\in \Gamma(M,\kappa^{\otimes k})$ we obtain a corresponding function on $\Omega_k$ via the one-to-one map $SM\ni (x,v)\mapsto \xi_x(v^{\otimes k})$.

Since $M$ is a disk, there is a nonvanishing $\xi\in\Gamma(M,\kappa)$. Define a function $h:SM\to S^1$ by setting $h(x,v):=\xi_x(v)/|\xi_x(v)|$, and hence $h\in\Omega_1$. For the description of the range of $I_{A,\Phi}$ on higher order tensors we use the following unitary connection $A_{h}:=-h^{-1}Xh\id$ and skew-Hermitian Higgs field $\Phi_{\lambda}:=-i\lambda\id$. Then it is easy to see that $A_{h}$ and $\Phi_{\lambda}$ satisfy
$$
-h^{-1}\mathbf G_\mu h\id=A_{h}+\Phi_{\lambda}.
$$

We start with characterizing the range of attenuated magnetic ray transform $I_{A,\Phi}$ restricted to $\Omega_{m-1}\oplus\Omega_m\oplus\Omega_{m+1}$:
$$
I^\pm_{m,A,\Phi}:=I_{A,\Phi}|_{\Omega_{m-1}\oplus\Omega_m\oplus\Omega_{m+1}}:\Omega_{m-1}\oplus\Omega_m\oplus\Omega_{m+1}\to C^\infty(\p_+ SM,\C).
$$
Consider $f+f'+f''\in \Omega_{m-1}\oplus\Omega_m\oplus\Omega_{m+1}$. Let $u$ be the solution of $(\mathbf G_\mu+A+\Phi)u=-(f+f'+f'')$ with $u|_{\p_-SM}=0$. Then $h^{-m}u$ solves
$$
(\mathbf G_\mu+A+\Phi-mA_{h}-m\Phi_{\lambda}) h^{-m}u=-h^{-m}(f+f'+f''),\quad h^{-m}u|_{\p_-SM}=0.
$$
Since $f+f'+f''\in \Omega_{m-1}\oplus\Omega_m\oplus\Omega_{m+1}$ and $h^m\in \Omega_m$, we have $h^{-m}(f+f'+f'')\in\Omega_{-1}\oplus\Omega_0\oplus\Omega_{1}$. Therefore
\begin{align}
I^0_{A-mA_{h},\Phi-m\Phi_{\lambda}}(h^{-m}f')&+I^1_{A-mA_{h},\Phi-m\Phi_{\lambda}}(h^{-m}f+h^{-m}f'')\nonumber\\
                                             &\ \ \ \ \ \ \ \ =(h^{-m}|_{\p_+SM})I^\pm_{m,A,\Phi} (f+f'+f'').\label{transition-formula}
\end{align}
The range of the left hand side of \eqref{transition-formula} was described in Theorem~\ref{main}. Thus we directly conclude the following result.

\begin{Theorem}\label{transition-theorem}
Let $(M,g,\alpha)$ be a simple two-dimensional magnetic system, $A$ a unitary connection and $\Phi$ a skew-Hermitian Higgs field. Then a function $u\in C^\infty(\p_+ SM,\C)$ belongs to the range of $I^\pm_{m,A,\Phi}$ if and only if
$$
u=(h^m|_{\p_+SM})\(P_{A-mA_{h},\Phi-m\Phi_{\lambda}}(w)+I^1_{A-mA_{h},\Phi-m\Phi_{\lambda}}\eta\)
$$
for some $w\in\mathcal S_{A-mA_{h},\Phi-m\Phi_{\lambda}}^\infty(\p_+ SM,\C)$ and $\eta\in\mathfrak H_{A-mA_{h}}$.
\end{Theorem}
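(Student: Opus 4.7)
The plan is to derive the theorem as a direct corollary of Theorem~\ref{main} via the transition formula \eqref{transition-formula} that has already been established in the excerpt. The key observation is that multiplication by the unimodular function $h^{-m}\in\Omega_{-m}$ (well-defined, nonvanishing, and smooth because $h$ is unimodular) is a $C^\infty$-linear isomorphism
$$
\Omega_{m-1}\oplus\Omega_m\oplus\Omega_{m+1}\longrightarrow\Omega_{-1}\oplus\Omega_0\oplus\Omega_1,
$$
and \eqref{transition-formula} intertwines $I^\pm_{m,A,\Phi}$ with $I^0_{A-mA_h,\Phi-m\Phi_\lambda}+I^1_{A-mA_h,\Phi-m\Phi_\lambda}$ up to multiplication by $h^{\pm m}|_{\p_+ SM}$. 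This reduces the range characterization of $I^\pm_{m,A,\Phi}$ to the already-known range characterization of $I^0_{A-mA_h,\Phi-m\Phi_\lambda}+I^1_{A-mA_h,\Phi-m\Phi_\lambda}$ provided by Theorem~\ref{main}.

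For the "only if" direction, I would start from $u=I^\pm_{m,A,\Phi}(f+f'+f'')$ with $f+f'+f''\in\Omega_{m-1}\oplus\Omega_m\oplus\Omega_{m+1}$. Formula \eqref{transition-formula} rewrites $(h^{-m}|_{\p_+SM})u$ as
$$
I^0_{A-mA_h,\Phi-m\Phi_\lambda}(h^{-m}f')+I^1_{A-mA_h,\Phi-m\Phi_\lambda}(h^{-m}f+h^{-m}f''),
$$
where $h^{-m}f'\in\Omega_0$ and $h^{-m}(f+f'')\in\Omega_{-1}\oplus\Omega_1$. Applying Theorem~\ref{main} to the pair $(A-mA_h,\Phi-m\Phi_\lambda)$ furnishes $w\in\mathcal S^\infty_{A-mA_h,\Phi-m\Phi_\lambda}(\p_+SM,\C)$ and $\eta\in\mathfrak H_{A-mA_h}$ with
$$
(h^{-m}|_{\p_+SM})u=P_{A-mA_h,\Phi-m\Phi_\lambda}w+I^1_{A-mA_h,\Phi-m\Phi_\lambda}\eta,
$$
and multiplying by $h^m|_{\p_+SM}$ yields the stated expression for $u$. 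Conversely, given $w$ and $\eta$ as in the statement, Theorem~\ref{main} identifies $P_{A-mA_h,\Phi-m\Phi_\lambda}w+I^1_{A-mA_h,\Phi-m\Phi_\lambda}\eta$ as the attenuated ray transform, with respect to the pair $(A-mA_h,\Phi-m\Phi_\lambda)$, of some $g\in\Omega_{-1}\oplus\Omega_0\oplus\Omega_1$. Setting $F:=h^m g\in\Omega_{m-1}\oplus\Omega_m\oplus\Omega_{m+1}$ and applying \eqref{transition-formula} in the reverse direction gives $I^\pm_{m,A,\Phi} F=u$, placing $u$ in the range of $I^\pm_{m,A,\Phi}$.

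No genuinely new analytic difficulty arises here; the theorem is the algebraic shadow of Theorem~\ref{main} under the gauge-type conjugation encoded by the identity $-h^{-1}\mathbf G_\mu h\,\id=A_h+\Phi_\lambda$. What makes everything fit cleanly is that $h$ is unimodular and nonvanishing, so multiplication by $h^{\pm m}$ preserves $C^\infty$ regularity and shifts Fourier blocks by $\pm m$; and that all three objects appearing in the conclusion — the smoothness class $\mathcal S^\infty_{A,\Phi}$, the operator $P_{A,\Phi}$, and the harmonic-form space $\mathfrak H_A$ — are defined purely in terms of the attenuation pair, so they transform naturally under the substitution $(A,\Phi)\mapsto(A-mA_h,\Phi-m\Phi_\lambda)$. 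The only step that has to be checked carefully is that in the "if" direction one is entitled to treat the function in $\Omega_{-1}\oplus\Omega_1$ built from $\eta$ as a genuine 1-form lift, which is immediate from the identification $\Lambda^1(M,\C)\cong\Omega_{-1}\oplus\Omega_1$ used throughout the excerpt.
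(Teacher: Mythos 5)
Your proposal is correct and takes essentially the same approach as the paper: the paper establishes the transition formula \eqref{transition-formula} and then simply states that Theorem~\ref{transition-theorem} follows directly from Theorem~\ref{main}, which is exactly the reduction you carry out. Your write-up merely makes explicit the two directions of that deduction (including the invertibility of multiplication by $h^{\pm m}$), so there is no substantive difference.
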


Recall that, according to \cite[Section~2]{PSU1}, there is a one-to-one correspondence between $C^\infty(S_m(M),\C^n)$ and a subspace of the set of functions on $SM$ of the form $f=\sum_{k=-m}^m f_k$, $f_k\in\Omega_k$. For functions of finite degree such as this, we have
$$
\mathcal I_{A,\Phi}(f)=\sum_{k=-\left[(m+1)/3\right]}^{\left[(m+1)/3\right]} I^\pm_{3k,A,\Phi}\(f_{3k-1}+f_{3k}+f_{3k+1}\).
$$
Therefore, using this and Theorem~\ref{transition-theorem}, we obtain our second main result of the current paper.

\begin{Theorem}\label{main2-tensor}
Let $(M,g,\alpha)$ be a simple two-dimensional magnetic system, $A$ a unitary connection and $\Phi$ a skew-Hermitian Higgs field. Then a function $u\in C^\infty(\p_+ SM,\C^n)$ belongs to the range of 
$$
I_{A,\Phi}\big|_{\Omega_{-m}\oplus\cdots\oplus\,\Omega_{m}}
$$ if and only if there are $w_{3k}\in\mathcal S_{A-3kA_{h},\Phi-3k\Phi_{\lambda}}^\infty(\p_+ SM,\C)$ and $\eta_{3k}\in\mathfrak H_{A-3kA_{h}}$ for all $k=-[(m+1)/3],...1,\dots,[(m+1)/3]$ such that
$$
u=\sum_{k=-\left[(m+1)/3\right]}^{\left[(m+1)/3\right]}\(h^{3k}|_{\p_+SM})\left(P_{A-3kA_{h},\Phi-3k\Phi_{\lambda}}(w_{3k})+I^1_{A-3kA_{h},\Phi-3k\Phi_{\lambda}}\eta_{3k}\right)\).
$$
\end{Theorem}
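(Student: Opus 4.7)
The plan is to derive Theorem~\ref{main2-tensor} as a direct consequence of Theorem~\ref{transition-theorem}, by reducing the range problem for $I_{A,\Phi}\big|_{\Omega_{-m}\oplus\cdots\oplus\Omega_m}$ to a sum of independent range problems, one for each of the Fourier-mode triples $\{3k-1,3k,3k+1\}$. The bridge between the two is simply linearity of $I_{A,\Phi}$ in the input.

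First I would record the combinatorial observation behind the identity already displayed in the excerpt: the integers partition into triples $J_k=\{3k-1,3k,3k+1\}$, $k\in\mathbb Z$, and under the convention that $f_j=0$ for $|j|>m$, any $f=\sum_{j=-m}^m f_j\in\Omega_{-m}\oplus\cdots\oplus\Omega_m$ satisfies
$$
I_{A,\Phi}(f)=\sum_{k=-[(m+1)/3]}^{[(m+1)/3]} I^{\pm}_{3k,A,\Phi}(f_{3k-1}+f_{3k}+f_{3k+1}).
$$

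For the forward direction, given $u=I_{A,\Phi}(f)$ I would apply this identity and then Theorem~\ref{transition-theorem} to each summand on the right, producing the required $w_{3k}\in\mathcal S_{A-3kA_{h},\Phi-3k\Phi_{\lambda}}^\infty(\p_+SM,\C)$ and $\eta_{3k}\in\mathfrak H_{A-3kA_{h}}$. For the reverse direction I would invoke Theorem~\ref{transition-theorem} to produce, for each $k$, a preimage $g_k\in\Omega_{3k-1}\oplus\Omega_{3k}\oplus\Omega_{3k+1}$ under $I^{\pm}_{3k,A,\Phi}$ of the $k$-th summand of the claimed expression for $u$; linearity of the ray transform then gives $I_{A,\Phi}\left(\sum_k g_k\right)=u$.

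The hard part will be making the reverse direction fit inside $\Omega_{-m}\oplus\cdots\oplus\Omega_m$ at the extreme triples $k=\pm[(m+1)/3]$: these can contain Fourier indices outside $\{-m,\dots,m\}$ whenever $m\not\equiv 1\pmod 3$, so the naive sum $\sum_k g_k$ need not lie in the advertised space. To close this gap I would unpack the construction in the proof of Theorem~\ref{transition-theorem} and choose the preimages $g_k$ at the extreme triples so that their out-of-range Fourier components vanish, exploiting the freedom to modify $g_k$ by kernel elements of $I^{\pm}_{3k,A,\Phi}$ that themselves live within the triple. Beyond this technical point, no new analytic input is required once Theorem~\ref{transition-theorem} is in hand.
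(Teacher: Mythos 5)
Your proposal follows the paper's proof exactly: the paper's entire argument for this theorem is the displayed decomposition of $I_{A,\Phi}(f)$ into the triples $I^\pm_{3k,A,\Phi}(f_{3k-1}+f_{3k}+f_{3k+1})$ followed by a termwise application of Theorem~\ref{transition-theorem}, together with linearity. The extreme-triple issue you flag is a genuine observation --- for $m\not\equiv 1\pmod 3$ the triples with $k=\pm[(m+1)/3]$ reach Fourier modes outside $\{-m,\dots,m\}$, so the preimage produced in the converse direction need not lie in $\Omega_{-m}\oplus\cdots\oplus\Omega_m$ --- but the paper does not address it either; it simply asserts the conclusion. Be aware, though, that your sketched repair is not obviously available: the natural kernel elements of $I^\pm_{3k,A,\Phi}$ living inside the triple are $(\mathbf G_\mu+A+\Phi)u$ with $u\in\Omega_{3k}$ vanishing on $\partial(SM)$, and cancelling the out-of-range component $g_{3k+1}$ of a preimage would force $\mu_+u=-g_{3k+1}$ together with $u|_{\partial M}=0$, an overdetermined boundary value problem that is not generically solvable. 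So if the domain $\Omega_{-m}\oplus\cdots\oplus\Omega_m$ is taken literally, the converse direction needs more care than either you or the paper supply; the statement is clean as written only when the domain is enlarged to $\Omega_{-(3K+1)}\oplus\cdots\oplus\Omega_{3K+1}$ with $K=[(m+1)/3]$.
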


\section{Surjectivity properties of \texorpdfstring{$I_{A,\Phi}$}{I*}}\label{4}
Let $d\Sigma^2$ be the volume form on $\p(SM)$. In the space of $\C^n$-valued functions on $\p_+ SM$ define the inner product
$$
\<h,h'\>_{\mu}=\int_{\p_+ SM}(h,h')_{\C^n}\,d\mu(x,v)
$$
where $d\mu(x,v)=\< v,\nu(x)\>d\Sigma^2(x,v)$. Denote the corresponding Hilbert space by $L_\mu^2(\p_+ SM,\C^n)$. As in \cite{PSU2}, using the integral representation for $I_{A,\Phi}$ and Santal\'o formula \cite[Lemma~A.8]{DPSU}, one can show that $I_{A,\Phi}$ can be extended to a bounded operator $I_{A,\Phi}:L^2(SM,\C^n)\to L_\mu^2(\p_+ SM,\C^n)$. In \cite{PSU2} it is shown that 
\begin{equation}\label{adjointformula}
\left(I_{A,\Phi}\right)^{*}h=\left(U_{A,\Phi}^{-1}\right)^{*}h_{\psi}
\end{equation}
The formula there is stated for geodesic flows with no attenuating Higgs field, however, the proof extends immediately to our case. Moreover, if $i_{k}:E_{k}\rightarrow L^2(SM,\C^n)$ denotes the usual inclusion, then 
\begin{equation}
\left(I_{A,\Phi}\circ i_{k}\right)^{*}h=\left(\left(U_{A,\Phi}^{-1}\right)^{*}h_{\psi}\right)_{k},
\end{equation}
where the final subscript denotes orthogonal projection onto $E_{k}$. Since we only deal with unitary connections and skew-Hermitian Higgs fields we have $\left(U_{A,\Phi}^{-1}\right)^{*}=U_{A,\Phi}$, and the formulas simplify. The identifications of $E_{0}$ and $L^{2}(M,\mathbb{C}^{n})$, and $E_{-1}\oplus E_{1}$ and $L^2(\Lambda^1(M),\C^n)$ mean that the adjoints we are concerned with differ from those above by the following constants 
\begin{align}
\left(I^{0}_{A,\Phi}\right)^{*}h &= 2\pi\left(U_{A,\Phi}h_{\psi}\right)_{0}\label{Remark1}\\ 
\left(I^{1}_{A,\Phi}\right)^{*}h &=\pi\left(U_{A,\Phi}h_{\psi}\right)_{-1} + \pi\left(U_{A,\Phi}h_{\psi}\right)_{1}\label{Remark2}
\end{align}
see \cite[Remark 5.2]{PSU2}. Below, we give an explicit calculation for the adjoint of 
$$
I_{A,\Phi}^{1}:L^2(\Lambda^1(M),\C^n)\rightarrow L_\mu^2(\p_+ SM,\C^n).
$$
To this end let $\beta$ be a $\mathbb{C}^{n}$-valued $1$-form and $h\in L^{2}_{\mu}(\partial_{+}(SM),\mathbb{C}^{n}).$
\begin{align*}
&\left\langle I^{1}_{A,\Phi}\beta,h\right\rangle_{\mu}\\
&= \int_{\partial_{+}(SM)}\\
&\quad\quad\quad\left\langle\int_{0}^{\tau_{+}(x,v)}\!\!\!\!\!\!\!U_{A,\Phi}^{-1}(\varphi_{t}(x,v))\beta(\varphi_{t}(x,v))dt,h(x,v) \right\rangle_{\mathbb{C}^{n}}\!\!d\mu(x,v)\\
&= \int_{\partial_{+}(SM)} \int_{0}^{\tau_{+}(x,v)}\\
&\quad\quad\quad\left\langle U_{A,\Phi}^{-1}(\varphi_{t}(x,v))\beta(\varphi_{t}(x,v)),h_{\psi}(\varphi_{t}(x,v)) \right\rangle_{\mathbb{C}^{n}}dtd\mu(x,v)\\
&= \int_{SM} \left\langle U_{A,\Phi}^{-1}(x,v)\beta(x,v),h_{\psi}(x,v) \right\rangle_{\mathbb{C}^{n}}d\Sigma^{3}(x,v)\\
&= \int_{SM} \left\langle \beta(x,v),\left(U_{A,\Phi}^{-1}(x,v)\right)^{*}h_{\psi}(x,v) \right\rangle_{\mathbb{C}^{n}}d\Sigma^{3}(x,v)\\
&= \int_{M} \int_{S_{x}M} \left\langle \beta(x,v),\left(U_{A,\Phi}^{-1}(x,v)\right)^{*}h_{\psi}(x,v) \right\rangle_{\mathbb{C}^{n}}dS_{x}(v)d\mbox{Vol}_{g}(x)\\
&= \int_{M} \int_{S_{x}M} \left\langle \beta_{i}(x)v^{i},\left(U_{A,\Phi}^{-1}(x,v)\right)^{*}h_{\psi}(x,v) \right\rangle_{\mathbb{C}^{n}}dS_{x}(v)d\mbox{Vol}_{g}(x)\\
&= \int_{M} \left\langle \beta_{i}(x),\int_{S_{x}M}v^{i}\left(U_{A,\Phi}^{-1}(x,v)\right)^{*}h_{\psi}(x,v)dS_{x}(v) \right\rangle_{\mathbb{C}^{n}}d\mbox{Vol}_{g}(x)\\
\end{align*}
Hence,
\begin{align*} 
&(I^{1}_{A,\Phi})^{*}(h)(x)\\
&= \int_{S_{x}M}v^{1}U_{A,\Phi}(x,v)h_{\psi}(x,v)dS_{x}(v)\epsilon_{1}\ + \int_{S_{x}M}v^{2}U_{A,\Phi}(x,v)h_{\psi}(x,v)dS_{x}(v)\epsilon_{2},
\end{align*}
where $\left\{e_{1},e_{2}\right\}$ is an orthonormal frame for the tangent bundle and $\left\{\epsilon_{1},\epsilon_{2}\right\}$ is its dual frame. The aim of the rest of this section is to prove the following analogue of \cite[Theorems 5.4 \& 5.5]{PSU2}.
\begin{Theorem}\label{surjectivity of I_A^*}
Let $(M,g,\alpha)$ be a two-dimensional simple magnetic system, $A$ a unitary connection and $\Phi$ a skew-Hermitian Higgs field. Suppose one is given $f\in C^{\infty}(M,\mathbb{C}^{n})$ and a $\mathbb{C}^{n}$-valued $1$-form $\omega$. Then there exists $w\in\mathcal{S}^{\infty}_{A,\Phi}(\partial_{+}(SM),\mathbb{C}^{n})$ such that $(I^{0}_{A,\Phi})^{*}(w)=f$ and $(I^{1}_{A,\Phi})^{*}(w)=\omega$ iff the following equality is satisfied $d_{A}^{*}\omega=2\Phi f$.
\end{Theorem}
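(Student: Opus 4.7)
\emph{Necessity.} Suppose $w\in\mathcal{S}^\infty_{A,\Phi}(\partial_+SM,\mathbb{C}^n)$ satisfies $(I^0_{A,\Phi})^*w=f$ and $(I^1_{A,\Phi})^*w=\omega$. The plan is a direct computation: set $u:=w^\sharp=U_{A,\Phi}w_\psi$, which by hypothesis is smooth on $SM$ and satisfies the homogeneous transport equation $\mathbf G_\mu u+(A+\Phi)u=0$. The formulas \eqref{Remark1} and \eqref{Remark2} translate immediately to $u_0=f/(2\pi)$ together with $\pi(u_{-1}+u_1)$ being the function on $SM$ that corresponds to $\omega$ under the identification $\Omega_{-1}\oplus\Omega_1\cong C^\infty(\Lambda^1(M),\mathbb{C}^n)$. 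Writing $X=\eta_++\eta_-$ and $A=A_{-1}+A_1$ with $A_{\pm1}\in\Omega_{\pm1}$, and using $Vu_0=0$ together with the shift relations $\eta_\pm:\Omega_k\to\Omega_{k\pm1}$, projecting the transport equation onto $\Omega_0$ yields
\[
\eta_-u_1+\eta_+u_{-1}+A_1u_{-1}+A_{-1}u_1+\Phi u_0=0.
\]
A standard Guillemin--Kazhdan Fourier-mode computation identifies the sum $\eta_-u_1+\eta_+u_{-1}+A_1u_{-1}+A_{-1}u_1$, after clearing the factors of $\pi$ coming from the identification, with (a multiple of) $-d_A^*\omega$; substituting $u_0=f/(2\pi)$ then delivers the compatibility $d_A^*\omega=2\Phi f$.

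\emph{Sufficiency.} This is the substantive direction. Given $f$ and $\omega$ with $d_A^*\omega=2\Phi f$, one must produce $w\in\mathcal{S}^\infty_{A,\Phi}(\partial_+SM,\mathbb{C}^n)$ realising both prescribed adjoint values. Following the strategy announced in the introduction, the plan is to avoid working with $I_{A,\Phi}|_{\Omega_{-1}\oplus\Omega_0\oplus\Omega_1}$---whose kernel is non-trivial and obstructs a direct argument---and instead to exploit the two restricted transforms
\[
I_{A,\Phi}^+ := I_{A,\Phi}|_{\Omega_0\oplus\Omega_1},\qquad I_{A,\Phi}^- := I_{A,\Phi}|_{\Omega_{-1}\oplus\Omega_0}.
\]
Each of $I_{A,\Phi}^\pm$ is genuinely injective on its domain: this follows by adapting the injectivity arguments of \cite{Ain} (the magnetic Pestov identity combined with the holomorphic integrating factors $A_h,\Phi_\lambda$ from Section~\ref{3}) to these smaller domains. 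Injectivity together with the ellipticity of the normal operators $(I_{A,\Phi}^\pm)^*I_{A,\Phi}^\pm$ as pseudodifferential operators of order $-1$ yields closed range, hence $L^2$-surjectivity of the adjoints $(I_{A,\Phi}^\pm)^*$, which elliptic regularity upgrades to smooth surjectivity with preimages lying in $\mathcal{S}^\infty_{A,\Phi}$.

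The final step is the patching. Decomposing $\omega$ into its Fourier pieces $\mu_1\in\Omega_1$ and $\mu_{-1}\in\Omega_{-1}$, the two surjectivities produce smooth $w^\pm\in\mathcal{S}^\infty_{A,\Phi}$ whose images under $(I_{A,\Phi}^\pm)^*$ realise chosen $(\Omega_0,\Omega_{\pm1})$ targets; one then combines $w^+$ and $w^-$ so that the resulting $w$ simultaneously satisfies $(I^0_{A,\Phi})^*w=f$ and $(I^1_{A,\Phi})^*w=\omega$. The hypothesis $d_A^*\omega=2\Phi f$ is precisely the single linear relation among the prescribed $\Omega_0$ and $\Omega_{\pm1}$ data that makes this consistent patching possible---it is exactly the obstruction inherited from the kernel of $I_{A,\Phi}|_{\Omega_{-1}\oplus\Omega_0\oplus\Omega_1}$, and degenerates to $d_A^*\omega=0$ in the pure-geodesic, no-Higgs case treated in \cite{PSU2}. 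The main obstacle is exactly this combination of simultaneous smooth adjoint-surjectivity and consistent patching: the Higgs coupling $\Phi u_0$ (which produces the right-hand side $2\Phi f$ absent in \cite{PSU2}) and the non-time-reversibility of the magnetic flow require careful tracking of the $\lambda V$ term throughout the Fourier-mode analysis.
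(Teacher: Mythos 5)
Your plan reproduces the paper's proof essentially step for step: the necessity direction is the paper's projection of the transport equation for $w^\sharp$ onto $\Omega_0$ (the condition \eqref{patchingcondition}, identified with $d_A^*\omega=2\Phi f$ via \cite[Lemma~6.2]{PSU2}), and the sufficiency direction is exactly Propositions~\ref{surj for m=0} and \ref{surj for all m} (adjoint surjectivity for the transforms restricted to $\Omega_{0}\oplus\Omega_{1}$ and $\Omega_{-1}\oplus\Omega_{0}$, obtained from injectivity plus ellipticity of the normal operator) followed by the same Fourier-series splicing $w=\sum_{k\le-1}u_k+f+\sum_{k\ge1}u'_k$, with the hypothesis $d_A^*\omega=2\Phi f$ closing precisely the $k=0$ mode. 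The one step you compress --- ``injectivity plus ellipticity yields closed range, hence surjectivity of the adjoints'' --- is in the paper a genuinely nontrivial argument (extension to a closed surface, a Fredholm parametrix, and a Fr\'echet-space duality/weak$^*$-closed-range argument in Step~3 of Lemma~\ref{lemma2.2}), but the idea you invoke is the correct one.
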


\subsection{Surjectivity of adjoints}
Recall that for our oriented Riemannian surface $(M,g)$ we have 
$$L^{2}(SM,\mathbb{C}^{n})=\bigoplus_{k\in\mathbb{Z}}E_{k},$$
where $E_{k}$ is the $k$-eigenspace for $-iV$. We also have $\Omega_{k}:=C^{\infty}(SM,\mathbb{C}^{n})\cap E_{k}.$ We introduce here some more notation to specify such spaces for manifolds other than $M$.

Let $W$ be an oriented Riemannian surface. We have the following decomposition
$$L^{2}(SW,\mathbb{C}^{n})=\bigoplus_{k\in\mathbb{Z}}E^{W}_{k},$$
where $E^{W}_{k}$ is the $k$-eigenspace for $-iV$.
We define $C^{\infty}(W_{k}\oplus W_{l},\mathbb{C}^{n}):=C^{\infty}(SW,\mathbb{C}^{n})\cap \left(E^{W}_{k}\oplus E^{W}_{l}\right)$. Note that $C^{\infty}(M_{k}\oplus M_{l},\mathbb{C}^{n})=\Omega_{k}\oplus\Omega_{l}$.

For this section $I_{A,\Phi}^{0,1}$ will denote the ray transform with domain $E_{0}\oplus E_{1}$. We have $$N_{A,\Phi}:=I^{*}_{A,\Phi}I_{A,\Phi}:L^{2}(SM,\mathbb{C}^{n})\rightarrow L^{2}(SM,\mathbb{C}^{n}),$$
and similarly
$$N^{0,1}_{A,\Phi}:=(I^{0,1}_{A,\Phi})^{*}I^{0,1}_{A,\Phi}:E_{0}\oplus E_{1}\rightarrow E_{0}\oplus E_{1}.$$
Below we extend $M$ to a slightly larger manifold $\widetilde{M}$. The notation $\tilde{I}_{A,\Phi}^{0,1}$ will denote the ray transform with domain $E^{\widetilde{M}}_{0}\oplus E^{\widetilde{M}}_{1}$, and $\tilde{N}^{0,1}_{A,\Phi}:=(\tilde{I}^{0,1}_{A,\Phi})^{*}\tilde{I}^{0,1}_{A,\Phi}$.

For the proof of Theorem~\ref{surjectivity of I_A^*} we need the following:

\begin{Proposition}\label{surj for m=0}
For given $f\in \Omega_{0}$ and $\eta\in\Omega_{1}$, there is $u\in C^\infty(SM,\C^n)$ such that $(\mathbf G_\mu+A+\Phi)u=0$ and $u_0=f$, $u_{1}=\eta$.
\end{Proposition}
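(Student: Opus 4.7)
The plan is to extend the simple magnetic system to a slightly larger one and deduce Proposition~\ref{surj for m=0} from surjectivity of an adjoint on the extension, via ellipticity of a normal operator.

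First I would extend $(M,g,\alpha)$ to a strictly larger simple magnetic system $(\widetilde M,\tilde g,\tilde\alpha)$ with $M\subset\mathrm{int}(\widetilde M)$, and smoothly extend $A$ and $\Phi$; I continue to denote the extensions by the same letters. Next I would extend $f\in\Omega_0$ to $\tilde f\in C^\infty(\widetilde M,\C^n)$ and $\eta\in\Omega_1$ (viewed via the identification $\Omega_1\cong\Gamma(M,\kappa\otimes\C^n)$) to $\tilde\eta\in\widetilde\Omega_1$, both compactly supported in $\mathrm{int}(\widetilde M)$. It then suffices to produce $\tilde w\in\widetilde{\mathcal S}^\infty_{A,\Phi}(\p_+S\widetilde M,\C^n)$ with $(\tilde w^\sharp)_0=\tilde f$ and $(\tilde w^\sharp)_1=\tilde\eta$, since $u:=\tilde w^\sharp\big|_{SM}$ is then smooth on $SM\subset\mathrm{int}(S\widetilde M)$, satisfies $(\mathbf G_\mu+A+\Phi)u=0$, and has the prescribed Fourier components (these being computed fiberwise on the common spheres $S_xM=S_x\widetilde M$).

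By the adjoint formulas \eqref{Remark1}--\eqref{Remark2} applied on $\widetilde M$, producing such $\tilde w$ is equivalent to a surjectivity statement for $(\widetilde I^{0,1}_{A,\Phi})^*$. I would establish this by analyzing the normal operator $\widetilde N^{0,1}_{A,\Phi}:=(\widetilde I^{0,1}_{A,\Phi})^*\widetilde I^{0,1}_{A,\Phi}$. Applying Santaló's formula and adapting the principal-symbol computation of \cite{PSU2} to the magnetic setting (where $\mathbf G_\mu=X+\lambda V$ differs from the geodesic generator by a lower-order term, and both $A$ and $\Phi$ contribute only at lower order), one sees that $\widetilde N^{0,1}_{A,\Phi}$ is a classical elliptic pseudodifferential operator of order $-1$ on $\widetilde M$. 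Combined with the injectivity of $\widetilde I^{0,1}_{A,\Phi}$ on its genuinely injective domain $\widetilde\Omega_0\oplus\widetilde\Omega_1$---an injectivity noted in the introduction and tracing back to \cite{Ain}---this makes $\widetilde N^{0,1}_{A,\Phi}$ a self-adjoint, elliptic, injective $\Psi$DO, hence an isomorphism between appropriate Sobolev spaces. Solving $\widetilde N^{0,1}_{A,\Phi}h=(\tilde f,\tilde\eta)$ and setting $\tilde w:=\widetilde I^{0,1}_{A,\Phi}h$ then produces the desired $\tilde w$ (up to the harmless constants $2\pi,\pi$ from \eqref{Remark1}--\eqref{Remark2}, absorbed into the choice of $h$); the compact support of $h$ strictly inside $\widetilde M$ gives the smoothness of $\tilde w^\sharp$ on $S\widetilde M$, i.e.\ $\tilde w\in\widetilde{\mathcal S}^\infty_{A,\Phi}$, via the characterization of this space recalled in Section~\ref{2.2}.

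The main obstacle is the ellipticity of $\widetilde N^{0,1}_{A,\Phi}$ in the magnetic setting: although $\lambda V$ and the attenuations are formally of lower order relative to $X$, the coupling between the degree-$0$ and degree-$1$ components induced by $A$ and by the magnetic correction must be tracked at the symbolic level to ensure nondegeneracy. A secondary technical point is the smoothness of $\tilde w^\sharp$ across the glancing set of $S\widetilde M$, which is controlled by the interior compact support of $h$: then $\widetilde I^{0,1}_{A,\Phi}h$ vanishes near glancing, and \cite[Lemma~4.2]{Ain} (or the analogous characterization via $Q_{A,\Phi}$ from Section~\ref{2.2}) yields smoothness of the lift.
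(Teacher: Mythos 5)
Your overall strategy---extend to a slightly larger simple system $\widetilde M$, reduce the proposition to a surjectivity statement for $(\tilde I^{0,1}_{A,\Phi})^*$, and attack that through the normal operator $\tilde N^{0,1}_{A,\Phi}=(\tilde I^{0,1}_{A,\Phi})^*\tilde I^{0,1}_{A,\Phi}$, whose ellipticity and injectivity you establish exactly as in Section~\ref{4.3} and Step~1 of Lemma~\ref{lemma2.2}---is the same as the paper's. The gap is in the step where you conclude: from ``self-adjoint, elliptic, injective $\Psi$DO'' you infer that $\tilde N^{0,1}_{A,\Phi}$ is ``an isomorphism between appropriate Sobolev spaces'' and then solve $\tilde N^{0,1}_{A,\Phi}h=(\tilde f,\tilde\eta)$ exactly on $\widetilde M$. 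This fails for two reasons. First, $\tilde N^{0,1}_{A,\Phi}$ is a pseudodifferential operator only in $\widetilde M^{\operatorname{int}}$; on a compact manifold with boundary there is no Fredholm theory producing an isomorphism $H^s\to H^{s+1}$, and the naive statement on a fixed Sobolev space is actually false: as an operator on $L^2$, $\tilde N^{0,1}_{A,\Phi}$ is compact, injective and self-adjoint, hence has dense but \emph{proper} range, so injectivity plus self-adjointness alone never yields surjectivity here. Second, even granting exact solvability, the solution $h$ obtained from a nonlocal inverse of order $+1$ has no reason to be compactly supported in $\widetilde M^{\operatorname{int}}$, and you explicitly need that compact support to guarantee that $\tilde w=\tilde I^{0,1}_{A,\Phi}h$ lies in $\widetilde{\mathcal S}^\infty_{A,\Phi}$, i.e.\ that $\tilde w^\sharp$ is smooth across the glancing region.

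The paper circumvents both problems by proving a weaker but sufficient statement (Lemma~\ref{lemma2.2}): the map $r_M\tilde N^{0,1}_{A,\Phi}$ from $C^\infty_c(\widetilde M^{\operatorname{int}}_0\oplus\widetilde M^{\operatorname{int}}_1,\C^n)$ to $C^\infty(M_0\oplus M_1,\C^n)$ is surjective---one only matches the data after restriction to $M$, and the preimage is compactly supported by construction. This is achieved by embedding $\widetilde M$ into a closed surface $W$, patching the local normal operators into a globally defined elliptic operator $P=\sum_k\phi_k(N_k)^{0,1}_{A,\Phi}(\phi_k\,\cdot)$ of order $-1$ on $W$, for which genuine Fredholm/duality theory is available, and then running the Fr\'echet-space surjectivity criterion (injectivity of the transpose together with weak$^*$ closedness of its range). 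To repair your argument you must insert this closed-manifold construction, or an equivalent device, in place of the claimed isomorphism; the ellipticity and injectivity you invoke are necessary inputs but do not by themselves yield the surjectivity you need.
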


\begin{proof}

We follow ideas from \cite{DU}. Embed $M$ into the interior of a compact surface with boundary $\widetilde{M}$, extend $g$ and $\alpha$ to $\widetilde{M}$, choosing $(\widetilde{M},g,\alpha)$ to be sufficiently close to $(M,g,\alpha)$ so that it remains simple. We also assume that $A$ and $\Phi$ are extended unitarily to $\widetilde{M}$, and we'll keep the same notations for the extensions. Let $r_{M}$ denote the restriction operator from $\widetilde{M}$ to $M$. The following is true:

\begin{Lemma} The operator 
$$r_{M}\tilde{N}^{0,1}_{A,\Phi}:C^{\infty}_{c}(\widetilde{M}^{\operatorname{int}}_{0}\oplus \widetilde{M}^{\operatorname{int}}_{1},\mathbb{C}^{n})\rightarrow C^{\infty}(M_{0}\oplus M_{1},\mathbb{C}^{n})$$
is surjective.
\label{lemma2.2}
\end{Lemma}

Assuming this lemma we prove Proposition~\ref{surj for m=0}. Suppose $[f,\eta]\in C^{\infty}(M_{0}\oplus M_{1},\mathbb{C}^{n})$. By Lemma~\ref{lemma2.2} there exists $[h,\beta]\in C^{\infty}_{c}(\widetilde{M}^{\operatorname{int}}_{0}\oplus \widetilde{M}^{\operatorname{int}}_{1},\mathbb{C}^{n})$ such that
$$[f,\eta]=r_{M}\tilde{N}^{0,1}_{A,\Phi}[h,\beta]=r_{M}(\tilde{I}^{0,1}_{A,\Phi})^{*}\tilde{I}^{0,1}_{A,\Phi}[h,\beta].$$
Recall, $\widetilde{U}_{A,\Phi}$ solves
$$(\mathbf G_\mu+A+\Phi)\widetilde{U}_{A,\Phi}=0,\ \ \widetilde{U}_{A,\Phi}\big|_{\partial_{+}(S\widetilde{M})}=\operatorname{Id}.$$
Now define 
$$\tilde{w}(x,v):=\int^{\tilde{\tau}_{+}(x,v)}_{\tilde{\tau}_{-}(x,v)}\widetilde{U}^{-1}_{A,\Phi}(\gamma_{x,v}(t),\dot{\gamma}_{x,v}(t))\left[h(\gamma_{x,v}(t))+\beta_{i}(\gamma_{x,v}(t))\dot{\gamma}^{i}_{x,v}(t)\right]dt.$$
Here $\tilde{\tau}_{+}(x,v)$ is the unique time $t\geq 0$ when the geodesic $\gamma_{x,v}(t)$ hits the boundary $\partial \widetilde{M}$, and  $\tilde{\tau}_{-}(x,v)$ is the unique time $t\leq 0$ when the geodesic $\gamma_{x,v}(t)$ hits the boundary $\partial \widetilde{M}$. Define $w':=\tilde{w}\big|_{\partial_{+}(SM)}$ and note that $\tilde{w}\in C^{\infty}(S\widetilde{M},\mathbb{C}^{n}).$ Now from the definition one may check that 
$$\tilde{I}^{0,1}_{A,\Phi}[h,\beta]=\tilde{w}\big|_{\partial_{+}(S\widetilde{M})}.$$
Using the formula for the adjoint (\ref{adjointformula}) we obtain, 
$$r_{M}(\tilde{I}^{0,1}_{A,\Phi})^{*}\tilde{I}^{0,1}_{A,\Phi}[h,\beta]=\left[\widetilde{U}_{A,\Phi}\tilde{w}\right]_{0,1}\Big|_{SM}.$$
Therefore,
\begin{align*} 
[f,\eta]   =&\, r_{M}(\tilde{I}^{0,1}_{A,\Phi})^{*}\tilde{I}^{0,1}_{A,\Phi}[h,\beta]\\
           =& \left[\widetilde{U}_{A,\Phi}\tilde{w}\right]_{0,1}\Big|_{SM}\\
           =&\, (I^{0,1}_{A,\Phi})^{*}\left(\widetilde{U}_{A,\Phi}\big|_{\partial_{+}(SM)}w'\right)
\end{align*}
By defining $w:=\widetilde{U}_{A,\Phi}\big|_{\partial_{+}(SM)}w'$ we have proven our theorem.
\end{proof}

\begin{Remark} The last step above requires one to understand that the two solutions to the transport equation on $SM$ and $S\widetilde{M}$ are related as follows
\begin{equation}
\widetilde{U}_{A,\Phi}(x,v)=U_{A,\Phi}(x,v)\left(\widetilde{U}_{A,\Phi}\big|_{\partial_{+}(SM)}\right)_{\psi}\!(x,v)\ \ \ \forall (x,v)\in SM.\label{eq: smalltobig} 
\end{equation}
To check this one first shows that $R:=U^{-1}_{A,\Phi}\widetilde{U}_{A,\Phi}$ is invariant under the flow. Furthermore,
$$\widetilde{U}_{A,\Phi}\big|_{\partial_{+}(SM)}=R\big|_{\partial_{+}(SM)}.$$
Therefore, $R=\left(\widetilde{U}_{A,\Phi}\big|_{\partial_{+}(SM)}\right)_{\psi},$ which is equivalent to (\ref{eq: smalltobig}).
\end{Remark}
Finally, we must prove Lemma~\ref{lemma2.2}.

\begin{proof}[Proof of Lemma~\ref{lemma2.2}] Step 1. Let $[f,\eta]\in C^{\infty}(\widetilde{M}_{0}\oplus \widetilde{M}_{1},\mathbb{C}^{n}).$ Suppose that $\tilde{I}^{0,1}_{A,\Phi}[f,\eta]=0$, this implies the existence of $u$ satisfying 
$$(\mathbf G_\mu+A+\Phi)u=-f-\eta,\ \ u\big|_{\partial(S\widetilde{M})}=0.$$
Moreover, from \cite{Ain}, $u\equiv u_{0}$, so $d_{A}u=-\eta$ and $\Phi u=-f$. Defining $\bar{\partial}_{A}:=d_{A}-i\star d_{A}$, we see that $\eta\in\Omega_{1}$ implies that $\bar{\partial}_{A}u=0$. Now because $\widetilde{M}$ has global coordinates we have that there exists a smooth map $F:\widetilde{M}\rightarrow GL(n,\mathbb{C})$ such that $F^{-1}\overline{\partial}(F\cdot)=\overline{\partial}_{A}(\cdot)$ (see \cite[Proposition 3.7]{Kb}), and so each component of $Fu\in C^{\infty}(\widetilde{M},\mathbb{C}^{n})$ is a holomorphic function. Thus, $Fu\big|_{\partial \widetilde{M}}=0$ implies that $u\equiv 0,$ and consequently $f=0$ and $\eta=0$.\\
\indent Step 2. We seek to compute the principal symbol of $\tilde{N}^{0,1}_{A,\Phi}$ and see that it is an elliptic pseudodifferential operator of order $-1$ in $\widetilde{M}^{\operatorname{int}}$.

Let $[f,\eta]\in E^{\widetilde{M}}_{0}\oplus E^{\widetilde{M}}_{1}$. Then we have $\tilde{N}^{0,1}_{A,\Phi}:E^{\widetilde{M}}_{0}\oplus E^{\widetilde{M}}_{1}\rightarrow E^{\widetilde{M}}_{0}\oplus E^{\widetilde{M}}_{1}$ and we introduce the following notation
$$\tilde{N}^{0,1}_{A,\Phi}[f,\eta]=\left[\tilde{N}^{00}_{A,\Phi}f+\tilde{N}^{01}_{A,\Phi}\eta, \tilde{N}^{10}_{A,\Phi}f+\tilde{N}^{11}_{A,\Phi}\eta\right].$$
The analysis of this operator is relegated to Section~\ref{4.3}, wherein it is shown that $\tilde{N}^{0,1}_{A,\Phi}$ is a pseudodifferential operator of order $-1$ in $\widetilde{M}^{\operatorname{int}}$ whose principal symbols are given by
\begin{align*}
\sigma_{P}\left(\tilde{N}^{00}_{A,\Phi}\right)(x,\xi) &= \mbox{diag}\left\{C\frac{1}{\left|\xi\right|}\right\},\\
\sigma_{P}\left(\tilde{N}^{01}_{A,\Phi}\right)(x,\xi) &= \mbox{diag}\left\{0\right\},\\
\sigma_{P}\left(\tilde{N}^{10}_{A,\Phi}\right)(x,\xi) &= \mbox{diag}\left\{0\right\},\\
\sigma_{P}\left(\tilde{N}^{11}_{A,\Phi}\right)\!(x,\xi) &= \mbox{diag}\left\{\frac{C}{2}\frac{1}{\left|\xi\right|}\right\},
\end{align*}
for some constant $C>0.$ Note, in particular, the operator is elliptic.\\
\indent Step 3. We achieve surjectivity on Sobolev spaces using the injectivity and ellipticity established above, closely following the proof of \cite[Lemma 4.5]{SaU} and \cite[Proposition 4.5]{Po}. $(M,g,\alpha)$ sits inside the larger simple magnetic system $(\widetilde{M},g,\alpha)$. Now embed $(\widetilde{M},g,\alpha)$ into a compact oriented Riemannian surface $W$, without boundary, with a magnetic field which restricts to $\alpha$. Extend all of the attenuations smoothly to $W$, keeping them unitary. Choose a finite atlas for $W$ consisting of open sets $\left\{U_{k}\right\}$ each of whose closure is a simple magnetic system, and choose a partition of unity $\left\{\phi_{k}\right\}$ subordinate to this atlas. Without loss of generality the open set of the first chart is given by $\widetilde{M}^{\operatorname{int}}$, and the corresponding function from the partition of unity $\phi_{1}$ is identically $1$ on $M$. We introduce the notation $H^{s}(W_{0}\oplus W_{1}):=H^{s}(SW)\cap \left(E^{W}_{0}\oplus E^{W}_{1}\right)$ for Sobolev spaces and $\mathcal{D}'(W_{0}\oplus W_{1})$ for distributions, that is, continuous linear functionals on $C^{\infty}(W_{0}\oplus W_{1},\mathbb{C}^{n})$. We will use similar notation for the analogous spaces for $M$ and $\widetilde{M}^{\operatorname{int}}$ below. Now define an operator $P$ with domain $\mathcal{D}'(W_{0}\oplus W_{1})$ as follows
$$P(h) = \sum_{k} \phi_{k} (N_{k})^{0,1}_{A,\Phi}\left(\phi_{k}h\right).$$ 
Note that this acts on distributions via duality. In the formula above, for each $k$, $(N_{k})^{0,1}_{A,\Phi}$ is the operator $(I^{0,1}_{A,\Phi})^{*}I^{0,1}_{A,\Phi}$ that is associated with the $k^{th}$ open set in the atlas. By construction $(N_{1})^{0,1}_{A,\Phi}=\tilde{N}^{0,1}_{A,\Phi}$. Each $(N_{k})^{0,1}_{A,\Phi}$ is an elliptic pseudodifferential operator of order $-1$, with symbol as given in Step 2. Therefore, $P$ is an elliptic pseudodifferential operator of order $-1$, moreover, it is a Fredholm operator from $H^{s}(W_{0}\oplus W_{1})$ into $H^{s+1}(W_{0}\oplus W_{1})$. We introduce the restriction operator $r_{M}:H^{s}(W_{0}\oplus W_{1})\rightarrow H^{s}(M_{0}\oplus M_{1})$ which is bounded and surjective. From the fact that $\phi_{1}\big|_{M}\equiv 1$, we know that $r_{M}P(h)=r_{M}\tilde{N}^{0,1}_{A,\Phi}(\phi_{1}h)$. This shows that the following operator 
$$r_{M}\tilde{N}^{0,1}_{A,\Phi}(\phi_{1}\cdot):C^{\infty}(W_{0}\oplus W_{1},\mathbb{C}^{n})\rightarrow C^{\infty}(M_{0}\oplus M_{1},\mathbb{C}^{n}).$$
is a continuous linear map between the above Fr\'{e}chet spaces. In order to show it's surjective it is sufficient to show its adjoint is injective, and its range is weak$^{*}$ closed \cite[Theorem 37.2]{Tr}. Let $h\in C^{\infty}(W_{0}\oplus W_{1},\mathbb{C}^{n})$ and $u\in \mathcal{D}'(M_{0}\oplus M_{1})$. We employ $i_{\widetilde{M}^{\operatorname{int}}}$ to denote the usual inclusion mapping from Sobolev spaces of $M_{0}\oplus M_{1}$ into Sobolev spaces of $\widetilde{M}^{\operatorname{int}}_{0}\oplus \widetilde{M}^{\operatorname{int}}_{1}$.
\begin{align*}
\left\langle \left(r_{M}\tilde{N}^{0,1}_{A,\Phi}(\phi_{1}\cdot)\right)^{*}u,h\right\rangle_{W} &= \left\langle u,\left(r_{M}\tilde{N}^{0,1}_{A,\Phi}\right)\phi_{1}h\right\rangle_{M}\\
&= \left\langle u,\left(r_{M}\phi_{1}\tilde{N}^{0,1}_{A,\Phi}\right)\phi_{1}h\right\rangle_{M}\\
&= \left\langle i_{\widetilde{M}^{\operatorname{int}}}u,\phi_{1}\tilde{N}^{0,1}_{A,\Phi}\phi_{1}h\right\rangle_{W}\\
&= \left\langle \phi_{1}\tilde{N}^{0,1}_{A,\Phi}\phi_{1}i_{\widetilde{M}^{\operatorname{int}}}u,h\right\rangle_{W}.
\end{align*}
Also,
\begin{align*}
\left\langle \left(r_{M}\tilde{N}^{0,1}_{A,\Phi}(\phi_{1}\cdot)\right)^{*}u,h\right\rangle_{W} &= \left\langle u,\left(r_{M}\tilde{N}^{0,1}_{A,\Phi}\right)\phi_{1}h\right\rangle_{M}\\
&= \left\langle u,r_{M}Ph\right\rangle_{M}\\
&= \left\langle Pi_{\widetilde{M}^{\operatorname{int}}}u,h\right\rangle_{W}.
\end{align*}
Since these equalities hold for all $h\in C^{\infty}(W_{0}\oplus W_{1},\mathbb{C}^{n})$ we conclude that 
$$\left(r_{M}\tilde{N}^{0,1}_{A,\Phi}(\phi_{1}\cdot)\right)^{*} = \phi_{1}\tilde{N}^{0,1}_{A,\Phi}\phi_{1}i_{\widetilde{M}^{\operatorname{int}}} = P\big|_{\mathcal{D}'(M_{0}\oplus M_{1})}.$$
Now suppose $\left(r_{M}\tilde{N}^{0,1}_{A,\Phi}(\phi_{1}\cdot)\right)^{*}u=0.$ Therefore, $\phi_{1}\tilde{N}^{0,1}_{A,\Phi}\phi_{1}i_{\widetilde{M}^{\operatorname{int}}}u=0$. By ellipticity we deduce that $u$ is smooth on a slightly bigger set than $M$, and since it is supported in $M$, we have that $i_{\widetilde{M}^{\operatorname{int}}}u$ is smooth and compactly supported. Also, $0=\phi_{1}\tilde{N}^{0,1}_{A,\Phi}\phi_{1}i_{\widetilde{M}^{\operatorname{int}}}u$ implies, $\tilde{I}^{0,1}_{A,\Phi}\phi_{1}i_{\widetilde{M}^{\operatorname{int}}}u=0$. Hence, $u\equiv 0$ by Step 1.\\
\indent It remains to show $(r_{M}\tilde{N}^{0,1}_{A,\Phi}(\phi_{1}\cdot))^{*} = P\big|_{\mathcal{D}'(M_{0}\oplus M_{1})}$ has weak$^{*}$ closed range. This will follow once we show that the range of $P\big|_{\mathcal{D}'(M_{0}\oplus M_{1})}$ is weak$^{*}$ closed in $H^{s}(W_{0}\oplus W_{1})$ for each $s\in\mathbb{R}$. This will hold iff the range of $P\big|_{\mathcal{D}'(M_{0}\oplus M_{1})}$ intersected with the unit ball of $H^{s}(W_{0}\oplus W_{1})$ is weak$^{*}$ closed \cite[Corollary 3, IV.25]{Bo}. Suppose $Pu$ lies in the above intersection where $u\in \mathcal{D}'(M_{0}\oplus M_{1})$. Since $P:H^{s-1}(W_{0}\oplus W_{1})\rightarrow H^{s}(W_{0}\oplus W_{1})$ the Bounded Inverse Theorem implies $\left\|u\right\|_{H^{s-1}}\leq C\left\|Pu\right\|_{H^{s}}\leq C$, where $C>0$. Denote the set of all such $u\in\mathcal{D}'(M_{0}\oplus M_{1})$ by $J$. Then $J$ is weak$^{*}$ compact in $\mathcal{D}'(M_{0}\oplus M_{1})$, and $P(J)$ intersected with the unit ball of $H^{s}(W_{0}\oplus W_{1})$ will be weak$^{*}$ compact, and thus weak$^{*}$ closed.\\  
\indent Step 4. Finally, given $h'\in C^{\infty}(M_{0}\oplus M_{1},\mathbb{C}^{n})$, from Step 3 we can find $h\in C^{\infty}(W_{0}\oplus W_{1},\mathbb{C}^{n})$ such that $r_{M}\tilde{N}^{0,1}_{A,\Phi}(\phi_{1}h)=h'$. Therefore, $\phi_{1}h$ maps to $h'$, and is compactly supported in $\widetilde{M}^{\operatorname{int}}_{0}\oplus \widetilde{M}^{\operatorname{int}}_{1}$, as required.
\end{proof}

\begin{Proposition}\label{surj for all m}
For given $f\in \Omega_{m}$ and $\eta\in\Omega_{m+1}$, there is $u\in C^\infty(SM,\C^n)$ such that $(\mathbf G_\mu+A+\Phi)u=0$ and $u_m=f$, $u_{m+1}=\eta$.
\end{Proposition}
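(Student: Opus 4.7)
My plan is to reduce Proposition~\ref{surj for all m} to Proposition~\ref{surj for m=0} via the gauge transformation by $h^m$, exactly in the spirit of the ``transition formula'' \eqref{transition-formula} used in Section~\ref{3}. Recall that $h\in\Omega_1$ is the unit-modulus function constructed from a nonvanishing section of $\kappa$, so $h^m\in\Omega_m$ for every $m\in\Z$ and multiplication by $h^m$ shifts Fourier degrees by $m$. Recall also the key identity $-h^{-1}\mathbf G_\mu h\,\id=A_h+\Phi_\lambda$, and note that $A_h=-h^{-1}Xh\,\id$ is skew-Hermitian (since $|h|=1$ forces $h^{-1}Xh$ to be purely imaginary) and $\Phi_\lambda=-i\lambda\,\id$ is skew-Hermitian ($\lambda$ is real). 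Therefore $A-mA_h$ is again a unitary connection and $\Phi-m\Phi_\lambda$ a skew-Hermitian Higgs field, and Proposition~\ref{surj for m=0} applies to them.

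Given $f\in\Omega_m$ and $\eta\in\Omega_{m+1}$, I first form $\tilde f:=h^{-m}f\in\Omega_0$ and $\tilde\eta:=h^{-m}\eta\in\Omega_1$. By Proposition~\ref{surj for m=0}, applied with the connection $A-mA_h$ and Higgs field $\Phi-m\Phi_\lambda$, there exists $\tilde u\in C^\infty(SM,\C^n)$ with
$$
(\mathbf G_\mu+A-mA_h+\Phi-m\Phi_\lambda)\tilde u=0,\qquad \tilde u_0=\tilde f,\quad \tilde u_1=\tilde\eta.
$$
Set $u:=h^m\tilde u$.

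The verification that $u$ has the required properties is a short direct computation. Using $\mathbf G_\mu h^m=-m(A_h+\Phi_\lambda)h^m$, Leibniz gives
$$
\mathbf G_\mu(h^m\tilde u)=h^m\bigl[\mathbf G_\mu\tilde u-m(A_h+\Phi_\lambda)\tilde u\bigr]=h^m\bigl[-(A+\Phi)\tilde u\bigr]=-(A+\Phi)(h^m\tilde u),
$$
so $(\mathbf G_\mu+A+\Phi)u=0$. Since multiplication by $h^m\in\Omega_m$ sends $\Omega_k$ into $\Omega_{k+m}$, we have $(h^m\tilde u)_k=h^m\tilde u_{k-m}$, hence $u_m=h^m\tilde u_0=h^m\cdot h^{-m}f=f$ and $u_{m+1}=h^m\tilde u_1=\eta$, as required.

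There is no real obstacle beyond the setup; the substantive content is all contained in Proposition~\ref{surj for m=0}. The only points that need a moment of care are (i) checking that the shifted attenuations $A-mA_h$, $\Phi-m\Phi_\lambda$ remain of the allowed type so that Proposition~\ref{surj for m=0} is applicable, and (ii) the bookkeeping of Fourier components under multiplication by $h^m$, which works cleanly because $h$ is nowhere vanishing and lies in $\Omega_1$.
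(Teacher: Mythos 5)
Your proposal is correct and follows essentially the same route as the paper: conjugating the transport equation by $h^m$ to shift the attenuation to $A-mA_h$, $\Phi-m\Phi_\lambda$ and the Fourier degrees down to $0$ and $1$, then invoking Proposition~\ref{surj for m=0}. Your added verification that the shifted pair remains unitary/skew-Hermitian is a point the paper leaves implicit, but it is the same argument.
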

\begin{proof}
Fix a non-vanishing $h\in\Omega_1$. As before, consider the unitary connection $A_{h}=-h^{-1}Xh\id$ and skew-Hermitian Higgs field $\Phi_{\lambda}=-i\lambda\id$ satisfying
$$
-h^{-1}\mathbf G_\mu h\id=A_{h}+\Phi_{\lambda}.
$$
Note that for any $a\in C^\infty(SM,\C^n)$ the following holds
\begin{equation}\label{8eq}
(\mathbf G_\mu+A+\Phi-mA_{h}-m\Phi_{\lambda})a=-h^{-m}((\mathbf G_\mu+A+\Phi)(h^m a)).
\end{equation}
By Proposition~\ref{surj for m=0} there exists $a\in C^\infty(SM,\C^n)$ such that
$$
(\mathbf G_\mu+A+\Phi-mA_{h}-m\Phi_{\lambda})a=0
$$
and $a_0=h^{-m}f$, $a_1=h^{-m}\eta$. Set $u:=h^ma$, then clearly $u_m=f$ and $u_{m+1}=\eta$. By equality \eqref{8eq}, we have $(\mathbf G_\mu+A+\Phi)u=0$, which finishes the proof.
\end{proof}

\begin{proof}[Proof of Theorem~\ref{surjectivity of I_A^*}]
Let $\omega$ be a smooth $\C^n$-valued $1$-form on $M$ and let $f\in C^\infty(M,\C^n)$. By Proposition~\ref{surj for all m}, there exist $u,u'\in C^\infty(SM,\C^n)$ such that
$$
\begin{cases}
(\mathbf G_\mu+A+\Phi)u=0,\\
u_{-1}=\omega_{-1},\quad u_0=f,
\end{cases}\quad\text{and}\qquad\begin{cases}
(\mathbf G_\mu+A+\Phi)u'=0,\\
u'_0=f,\quad u'_{1}=\omega_{1}.
\end{cases}
$$
Consider $w\in C^\infty(SM,\C^n)$ defined as
$$
w:=\sum_{k=-\infty}^{-1}u_k+\sum_{k=1}^\infty u'_k+f.
$$
It is clear that $w_0=f$ and $w_{-1}+w_1=\omega_{-1}+\omega_1=\omega$. Introduce the operators $\mu_\pm =\eta_\pm+A_{\pm 1}$. Then $X+A=\mu_-+\mu_+$. It is easy to check that
$$
(\mathbf G_\mu+A+\Phi)w=0.
$$
In particular, we have
\begin{equation}
\mu_+w_{-1}+\mu_-w_1+\Phi w_0=0. \label{patchingcondition}
\end{equation}
By \cite[Lemma~6.2]{PSU2}, this is equivalent to our assumption $d^*_A\omega=2\Phi f$. Solutions of the transport equation are unique once boundary data is specified, therefore,  $w\big|_{\p_+ SM}$ satisfies the requirement of our theorem. Conversely, given $w\in\mathcal{S}^{\infty}_{A,\Phi}(\partial_{+}(SM),\mathbb{C}^{n})$ such that $(I^{0}_{A,\Phi})^{*}(w)=f$ and $(I^{1}_{A,\Phi})^{*}(w)=\omega$, then, in particular, $(\ref{patchingcondition})$ must hold, which implies $d^*_A\omega=2\Phi f$.
\end{proof}

\subsection{Addendum on the Symbol Computation}\label{4.3}
Here we provide some details regarding the computation of the principal symbol in Lemma~\ref{lemma2.2}. Let $[f,\eta]\in E^{\widetilde{M}}_{0}\oplus E^{\widetilde{M}}_{1}$. Let $\left\{e_{1},e_{2}\right\}$ be a local orthonormal frame for the tangent bundle with dual frame $\left\{\epsilon_{1},\epsilon_{2}\right\}$. Suppose 
$$\eta=\eta_{\epsilon_{1}}\epsilon_{1}+\eta_{\epsilon_{2}}\epsilon_{2}.$$
Then 
$$V(\eta)=-\eta_{\epsilon_{1}}\epsilon_{2}+\eta_{\epsilon_{2}}\epsilon_{1}.$$
And,
\begin{align*}
\eta_{1} =& \frac{1}{2}(\eta-iV(\eta))\\
=& \frac{1}{2}(\eta_{\epsilon_{1}}\epsilon_{1}+\eta_{\epsilon_{2}}\epsilon_{2}+i\eta_{\epsilon_{1}}\epsilon_{2}-i\eta_{\epsilon_{2}}\epsilon_{1})\\
=& \frac{1}{2}(\eta_{\epsilon_{1}}-i\eta_{\epsilon_{2}})(\epsilon_{1}+i\epsilon_{2}).
\end{align*}

\begin{align*}
\eta_{-1} =& \frac{1}{2}(\eta+iV(\eta))\\
=& \frac{1}{2}(\eta_{\epsilon_{1}}\epsilon_{1}+\eta_{\epsilon_{2}}\epsilon_{2}-i\eta_{\epsilon_{1}}\epsilon_{2}+i\eta_{\epsilon_{2}}\epsilon_{1})\\
=& \frac{1}{2}(\eta_{\epsilon_{1}}+i\eta_{\epsilon_{2}})(\epsilon_{1}-i\epsilon_{2}).
\end{align*}

Thus, 
\begin{equation}
\eta\equiv\eta_{1}\Leftrightarrow\eta_{\epsilon_{1}}=-i\eta_{\epsilon_{2}}
\label{omega1criteria}
\end{equation}
Now our operator is as follows $\tilde{N}^{0,1}_{A,\Phi}:E^{\widetilde{M}}_{0}\oplus E^{\widetilde{M}}_{1}\rightarrow E^{\widetilde{M}}_{0}\oplus E^{\widetilde{M}}_{1}$ and as introduced previously we'll employ the notation
$$\tilde{N}^{0,1}_{A,\Phi}[f,\eta]=\left[\tilde{N}^{00}_{A,\Phi}f+\tilde{N}^{01}_{A,\Phi}\eta, \tilde{N}^{10}_{A,\Phi}f+\tilde{N}^{11}_{A,\Phi}\eta\right].$$
Note that since we are working on a surface a $1$-form will locally be represented by two functions, however, $1$-forms constrained to lie in $E^{\widetilde{M}}_{1}$ will locally be represented by only one function, thanks to (\ref{omega1criteria}). Below we use $$\tilde{I}^{(+1)}_{A,\Phi}:=\tilde{I}_{A,\Phi}\big|_{E^{\widetilde{M}}_{1}}\ \text{and}\ \tilde{I}^{(0)}_{A,\Phi}:=\tilde{I}_{A,\Phi}\big|_{E^{\widetilde{M}}_{0}}.$$
This notation is only used in this section. We also write $\varphi_{x,v}(t)=\varphi^{e_{1}}_{x,v}(t)e_{1}+\varphi^{e_{2}}_{x,v}(t)e_{2}$. Recall also that $\tilde{U}_{A,\Phi}$ is the unique matrix solution to 
$$(\mathbf G_\mu+A+\Phi)\tilde{U}_{A,\Phi}=0\ \text{in}\ S\widetilde{M},\ \ \tilde{U}_{A,\Phi}\big|_{\partial_{+}(S\widetilde{M})}=\operatorname{Id}.$$

\begin{align*}
\left(\tilde{N}^{00}_{A,\Phi}f\right)(x):=&\left((\tilde{I}^{(0)}_{A,\Phi})^{*}\tilde{I}^{(0)}_{A,\Phi}f\right)(x)\\
=&\int_{S_{x}\widetilde{M}}\tilde{U}_{A,\Phi}(x,v)\int^{\tau_{+}(x,v)}_{\tau_{-}(x,v)}\tilde{U}^{-1}_{A,\Phi}\left(\varphi_{x,v}(t)\right)f(\varphi_{x,v}(t))dtdS_{x}(v)\\
\left(\tilde{N}^{01}_{A,\Phi}\eta\right)(x):=&\left((\tilde{I}^{(0)}_{A,\Phi})^{*}\tilde{I}^{(+1)}_{A,\Phi}\eta\right)(x)\\ 
=&\int_{S_{x}\widetilde{M}}\tilde{U}_{A,\Phi}(x,v)\\
&\times\int^{\tau_{+}(x,v)}_{\tau_{-}(x,v)}\tilde{U}^{-1}_{A,\Phi}\left(\varphi_{x,v}(t)\right)\left(\eta_{\epsilon_{1}}(\varphi_{x,v}(t))\varphi^{e_{1}}_{x,v}(t)\right)dtdS_{x}(v)\\
+&\int_{S_{x}\widetilde{M}}\tilde{U}_{A,\Phi}(x,v)\\
&\times\int^{\tau_{+}(x,v)}_{\tau_{-}(x,v)}\tilde{U}^{-1}_{A,\Phi}\left(\varphi_{x,v}(t)\right)\left(i\eta_{\epsilon_{1}}(\varphi_{x,v}(t))\varphi^{e_{2}}_{x,v}(t)\right)dtdS_{x}(v)\\
\left(\tilde{N}^{10}_{A,\Phi}f\right)(x):=&\left((\tilde{I}^{(+1)}_{A,\Phi})^{*}\tilde{I}^{(0)}_{A,\Phi}f\right)(x)\\
=&\int_{S_{x}\widetilde{M}}\frac{1}{2}\left(v^{e_{1}}-iv^{e_{2}}\right)\tilde{U}_{A,\Phi}(x,v)\\
&\times\int^{\tau_{+}(x,v)}_{\tau_{-}(x,v)}\tilde{U}^{-1}_{A,\Phi}\left(\varphi_{x,v}(t)\right)f(\varphi_{x,v}(t))dtdS_{x}(v)\\
\left(\tilde{N}^{11}_{A,\Phi}\eta\right)(x):=&\left((\tilde{I}^{(+1)}_{A,\Phi})^{*}\tilde{I}^{(+1)}_{A,\Phi}\eta\right)(x)\\
=&\int_{S_{x}\widetilde{M}}\frac{1}{2}\left(v^{e_{1}}-iv^{e_{2}}\right)\tilde{U}_{A,\Phi}(x,v)\\
&\times\int^{\tau_{+}(x,v)}_{\tau_{-}(x,v)}\tilde{U}^{-1}_{A,\Phi}\left(\varphi_{x,v}(t)\right)\left(\eta_{\epsilon_{1}}(\varphi_{x,v}(t))\varphi^{e_{1}}_{x,v}(t)\right)dtdS_{x}(v)\\
+&\int_{S_{x}\widetilde{M}}\frac{1}{2}\left(v^{e_{1}}-iv^{e_{2}}\right)\tilde{U}_{A,\Phi}(x,v)\\
&\times\int^{\tau_{+}(x,v)}_{\tau_{-}(x,v)}\tilde{U}^{-1}_{A,\Phi}\left(\varphi_{x,v}(t)\right)\left(i\eta_{\epsilon_{1}}(\varphi_{x,v}(t))\varphi^{e_{2}}_{x,v}(t)\right)dtdS_{x}(v)\\
\end{align*}

Using \cite[Lemma B.1]{DPSU} we see that $\tilde{N}^{0,1}_{A,\Phi}$ is a pseudodifferential operator in $\widetilde{M}^{\operatorname{int}}$ of order $-1$. (The lemma is stated for operators acting on scalar-valued functions, however, the proof of the lemma works for vector-valued functions as well \cite{St}.) In addition, the lemma guarantees that the principal symbol of each of the above operators is given by linear combinations of the principal symbols of the corresponding (unattenuated) operators which appear in \cite[Proposition 7.2]{DPSU}. In particular,
\begin{align*}
\sigma_{P}\left(\tilde{N}^{00}_{A,\Phi}\right)(x,\xi) &= \mbox{diag}\left\{C\frac{1}{\left|\xi\right|}\right\},\\
\sigma_{P}\left(\tilde{N}^{01}_{A,\Phi}\right)(x,\xi) &= \mbox{diag}\left\{0\right\},\\
\sigma_{P}\left(\tilde{N}^{10}_{A,\Phi}\right)(x,\xi) &= \mbox{diag}\left\{0\right\},
\end{align*}
\begin{align*}
\sigma_{P}&\left(\tilde{N}^{11}_{A,\Phi}\right)(x,\xi)\\
&= \mbox{diag}\left\{\frac{C}{2}\left(\frac{1}{\left|\xi\right|}-\frac{\xi^{\epsilon_{1}}\xi^{\epsilon_{1}}}{\left|\xi\right|^{3}}\right) - \frac{iC}{2}\left(\frac{\xi^{\epsilon_{2}}\xi^{\epsilon_{1}}}{\left|\xi\right|^{3}}\right) + \frac{iC}{2}\left(\frac{\xi^{\epsilon_{1}}\xi^{\epsilon_{2}}}{\left|\xi\right|^{3}}\right)+\frac{C}{2}\left(\frac{1}{\left|\xi\right|}-\frac{\xi^{\epsilon_{2}}\xi^{\epsilon_{2}}}{\left|\xi\right|^{3}}\right)\right\}\\
&=\mbox{diag}\left\{\frac{C}{2}\frac{1}{\left|\xi\right|}\right\}.
\end{align*}
where $C>0$. To explain this a little more thoroughly consider the formula for $\left(\tilde{N}^{00}_{A,\Phi}f\right)(x)$ given above. Essentially we are integrating a vector valued function $f(\varphi_{x,v}(t))$ against a matrix 
$$B(x,v,t):=\tilde{U}_{A,\Phi}(x,v)\tilde{U}^{-1}_{A,\Phi}\left(\varphi_{x,v}(t)\right).$$
\cite[Lemma B.1]{DPSU} states that we can compute the principal symbol by integrating the matrix $B(x,v,t)\big |_{t=0}$ against a certain function specified in the lemma. Now 
\begin{align*}
B(x,v,t)\big |_{t=0} &= \tilde{U}_{A,\Phi}(x,v)\tilde{U}^{-1}_{A,\Phi}\left(\varphi_{x,v}(0)\right)\\
                     &= \tilde{U}_{A,\Phi}(x,v)\tilde{U}^{-1}_{A,\Phi}\left(x,v\right)\\
                     &= \operatorname{Id}
\end{align*}
Therefore, the presence of the attenuation cancels in the symbol computation, and the formula for the principal symbol given above now follows for exactly the same reasons as in the unattenuated case in \cite{DPSU}. Similar reasoning applies to the other operators $\left(\tilde{N}^{01}_{A,\Phi}\eta\right)$ etc. above. The explicit formulas for the principal symbols are written down in \cite[Proposition 7.2]{DPSU}.\\
\indent Note that the fact that the symbols are independent of the attenuation depends upon the attenuation being unitary. Otherwise the matrix $\left(\tilde{U}_{A,\Phi}^{-1}\right)^{*}$ would occur inside the integrals, and this would not cancel upon application of \cite[Lemma B.1]{DPSU}.

\section{Proof of Theorem~\ref{main}}\label{5}
Let $w^\sharp$ be any smooth solution of the transport equation $\mathbf G_\mu w^\sharp+(A+\Phi)w^\sharp=0$. Applying \eqref{[H,X]} to $w^\sharp$ we get
$$
-(\mathbf G_\mu+A+\Phi)\mathcal H w^\sharp=(X_\perp+\star A)w_0^\sharp+(X_\perp w^\sharp+\star Aw^\sharp)_0.
$$
Since $X_\perp f=\star df$ for $f\in\Omega_0$ we have
$$
(X_\perp+\star A)w_0^\sharp=\star d_A w^\sharp_0.
$$
Since $X_\perp=i(\eta_--\eta_+)$ and $\star(A_{-1}+A_1)=i(A_{-1}-A_1)$ we obtain
$$
(X_\perp w^\sharp+\star Aw^\sharp)_0=i(\eta_-w^\sharp_1-\eta_+w^\sharp_{-1})+i(A_{-1}w^\sharp_{1}-A_1w^\sharp_{-1})=i(\mu_-w^\sharp_1-\mu_+w^\sharp_{-1})
$$
where $\mu_\pm=\eta_\pm+A_{\pm1}$. According to \cite[Lemma~6.2]{PSU2} the following holds
$$
\star d_A \alpha=2i(\mu_-\alpha_1-\mu_+\alpha_{-1}).
$$
Collecting everything together and using \eqref{Remark1} \& \eqref{Remark2} we have
$$
-2\pi(\mathbf G_\mu+A+\Phi)\mathcal H w^\sharp=2\pi \star d_Aw^\sharp_0+\pi\star d_A(w^\sharp_{-1}+w^\sharp_1).
$$
Applying $I_{A,\Phi}$ to the above equality we obtain
\begin{equation}\label{final-equation}
-2\pi P_{A,\Phi}=I^{0}_{A,\Phi} \star d_{A} (I^{1}_{A,\Phi})^{*}+I^{1}_{A,\Phi} \star d_{A} (I^{0}_{A,\Phi})^{*}.
\end{equation}

We also need the following result whose proof is postponed until after the proof of Theorem~\ref{main}.
\begin{Lemma}\label{final2}
Let $(M,g)$ be a Riemannian disk, $A$ a unitary connection and $\Phi$ a skew-Hermitian Higgs field.
\begin{itemize}
\item[(a)] Let $\alpha$ be a smooth $\C^n$-valued 1-form. Then there exist functions $a,p\in C^\infty(M,\C^n)$ and $\eta\in\mathfrak H_A$ such that $p\big|_{\p M} = 0$ and $d_A p+\star d_A a+\eta=\alpha$.
\item[(b)] Given $f,a\in C^\infty(M,\C^n)$ there is a smooth $\C^n$-valued 1-form $\beta$ with $\star d_A\,\beta=f$ and $d_A^*\beta=\Phi a$.
\end{itemize}
\end{Lemma}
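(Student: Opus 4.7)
The plan is to prove both parts of Lemma~\ref{final2} by reducing them to elliptic boundary value problems for the scalar connection Laplacian $\Delta_A^0 := d_A^* d_A$ on the disk $M$. This operator is elliptic, self-adjoint, and positive definite under Dirichlet conditions: if $d_A p = 0$ and $p|_{\p M} = 0$, then $p$ is $d_A$-parallel along every curve from $\p M$, so $p \equiv 0$; hence $\Delta_A^0 p = g$, $p|_{\p M} = 0$ is uniquely solvable for every smooth $g$.

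For part (a), I would first strip off the $d_A$-exact piece by solving
$$
d_A^* d_A p = d_A^* \alpha \text{ in } M, \qquad p\big|_{\p M} = 0,
$$
so that $\alpha_1 := \alpha - d_A p$ is coclosed. It then remains to write $\alpha_1 = \star d_A a + \eta$ with $\eta \in \mathfrak H_A$. This is the connection-valued Hodge decomposition of a coclosed 1-form on a surface with boundary, which exploits the identity $d_A^* \Omega^2 = \star d_A \Omega^0$ (a direct consequence of $d_A^* = -\star d_A \star$ together with $\star^2 = 1$ on $\Omega^0$). A clean way to organize existence is to establish surjectivity of the map $T\colon (p,a,\eta) \mapsto d_A p + \star d_A a + \eta$ by Fredholm duality. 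A short integration by parts shows that $T^*$ is well-defined on 1-forms $\omega$ with $\jmath^* \omega = 0$ (the boundary condition forced by the absence of a boundary constraint on $a$) and sends $\omega$ to $(d_A^* \omega,\, -\star d_A \omega,\, P_{\mathfrak H_A}\omega)$. Its kernel thus consists of 1-forms which are $d_A$-closed, $d_A^*$-coclosed, tangentially vanishing on $\p M$, and orthogonal to $\mathfrak H_A$; that is, elements of $\mathfrak H_A \cap \mathfrak H_A^\perp = \{0\}$. Elliptic regularity for the Hodge complex with boundary lifts the resulting $L^2$ surjectivity to $C^\infty$.

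For part (b), I would make the ansatz $\beta = d_A u + \star d_A v$ with $u, v$ vanishing on $\p M$. Using the surface identities $d_A \star d_A v = \star \Delta_A^0 v$ and $d_A^* \star d_A v = \star F_A v$, where $F_A = d_A \circ d_A$ is the curvature, the conditions $\star d_A \beta = f$ and $d_A^* \beta = \Phi a$ become the coupled Dirichlet system
$$
\Delta_A^0 v + \star F_A u = f, \qquad \Delta_A^0 u + \star F_A v = \Phi a.
$$
Its principal symbol is diagonal, $|\xi|^2\,\id$, and the zero-order terms $\star F_A$ are a compact perturbation of $\Delta_A^0 \oplus \Delta_A^0$, so the system is Fredholm of index zero. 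To check triviality of the kernel, pair the two equations against $v$ and $u$ respectively in $L^2$ and add: because $\star F_A$ is pointwise skew-Hermitian, the cross terms $\langle \star F_A u, v\rangle + \langle \star F_A v, u\rangle$ are purely imaginary, so the real part of the sum yields $\|d_A u\|^2 + \|d_A v\|^2 = 0$; together with the Dirichlet condition this forces $u = v = 0$. Fredholm then gives solvability for any $(f, \Phi a)$.

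The step I expect to require the most care is the adjoint bookkeeping in (a): one must set the Fredholm/duality argument in the correct Sobolev framework so that the boundary condition $\jmath^*\omega = 0$ is precisely what makes $T^*$ densely defined, and then invoke elliptic regularity for the Hodge system with these boundary conditions. Once that framework is in place, both parts reduce to standard facts about the Hodge/connection Laplacian on a Riemannian surface with boundary.
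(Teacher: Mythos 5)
Your ansatz in (b) is, after the substitution $u^{2}=\star v$, exactly the paper's ($\beta=d_{A}u^{0}+d_{A}^{*}u^{2}$), but the two arguments diverge at the boundary conditions, and yours breaks there because of a sign. With the convention $d_{A}^{*}=-\star d_{A}\star$ one has $d_{A}\star d_{A}v=-\star\Delta_{A}^{0}v$, not $+\star\Delta_{A}^{0}v$ (on $\R^{2}$: $d\star dv=(v_{xx}+v_{yy})\,dx\wedge dy=-\star(d^{*}dv)$), so the correct Dirichlet system is
$$
-\Delta_{A}^{0}v+\star F_{A}u=f,\qquad \Delta_{A}^{0}u+\star F_{A}v=\Phi a,
$$
with a relative minus sign between the two Laplacians. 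Your energy identity closes up only with the wrong sign: with the correct one the surviving cross terms are $\langle\star F_{A}v,u\rangle-\langle\star F_{A}u,v\rangle=2\operatorname{Re}\langle\star F_{A}v,u\rangle$, which is real and does not disappear upon taking real parts. Indeed, setting $w_{\pm}=u\pm iv$ decouples the system into $(\Delta_{A}^{0}\mp i\star F_{A})w_{\pm}=\Phi a\mp if$, where $i\star F_{A}$ is Hermitian of no definite sign; the Dirichlet quadratic form acquires the indefinite real term $\pm\operatorname{Im}\langle\star F_{A}w,w\rangle$, so for connections of large curvature (already for $n=1$ this is a magnetic Schr\"odinger operator with an indefinite potential) the Dirichlet kernel can be nontrivial, and, the problem being self-adjoint of index zero, solvability then fails for some data. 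This is precisely why the paper imposes no boundary condition at all and instead quotes \cite[Lemma~6.5]{PSU2}: on a compact surface with nonempty boundary the elliptic system $(-\Delta_{A}+R)u=h$ is always smoothly solvable without boundary conditions, the boundary providing the slack that removes the cokernel.

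For (a) the paper offers no proof, citing \cite[Lemma~6.1]{PSU2}, but your sketch has a flaw of its own: after arranging $d_{A}^{*}\alpha_{1}=0$ you cannot in general write $\alpha_{1}=\star d_{A}a+\eta$ with $\eta\in\mathfrak H_{A}$, because $d_{A}^{*}(\star d_{A}a)=\star F_{A}a$; applying $d_{A}^{*}$ to the desired identity forces $F_{A}a=0$, hence $a=0$ wherever the curvature is invertible. Unlike the flat Hodge decomposition, the three summands are not compatible with first peeling off the $d_{A}$-exact part, so the reduction must be abandoned and the full map $T(p,a,\eta)=d_{A}p+\star d_{A}a+\eta$ attacked at once. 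Your duality computation for $T$ does identify the correct annihilator $\mathfrak H_{A}\cap\mathfrak H_{A}^{\perp}=0$, but dense range is not surjectivity: the entire content is the closed-range (Shapiro--Lopatinskii/Fredholm) property of this first-order elliptic system with the mixed boundary condition $p|_{\p M}=0$ and no condition on $a$, which you acknowledge but do not supply.
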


\begin{proof}[Proof of Theorem~\ref{main}]
Suppose that $u=P_{A,\Phi}w+I_{A,\Phi}^1\eta$ for $\eta\in\mathfrak H_A$, then \eqref{final-equation} shows that $u$ belongs to the range of $I^0_{A,\Phi}+I^1_{A,\Phi}$. Conversely, suppose $u=I^0_{A,\Phi}(f)+I^1_{A,\Phi}(w)$ for
some smooth $\C^n$-valued function $f$ and 1-form $w$ on $M$. By item (a) of Lemma~\ref{final2} we can find $a,p\in C^\infty(M,\C^n)$ with $p\big|_{\p M}=0$ and $\eta\in\mathfrak H_A$ such that
$$
d_A p+\star d_A a+\eta=w.
$$
Since $I_{A,\Phi}(d_A p+\Phi p)=0$, we have $u=I^1_{A,\Phi}(\eta)+I^1_{A,\Phi}(\star d_Aa)+I^{0}_{A,\Phi}(f-\Phi p)$. By item (b) of Lemma~\ref{final2} we can find $\C^n$-valued 1-form $\beta$ such that $\star d_A\beta=f-\Phi p$ and $d_A^*\beta=2\Phi a$. Therefore we obtain $u=I^1_{A,\Phi}(\eta)+I^1_{A,\Phi}(\star d_A a)+I^0_{A,\Phi}(\star d_A\beta)$. By Theorem~\ref{surjectivity of I_A^*} there is $w\in\mathcal S_{A,\Phi}^\infty(\p_+ SM,\C^n)$ such that $(I_{A,\Phi}^1)^*(w)=\beta$ and $(I_{A,\Phi}^0)^*(w)=a$. Using \eqref{final-equation} we conclude that $u=I^1_{A,\Phi}(\eta)-2\pi P_{A,\Phi}(w)$.
\end{proof}

Item (a) of Lemma~\ref{final2} was proved in \cite[Lemma~6.1-(1)]{PSU2}. To prove item (b) of Lemma~\ref{final2}, consider the Laplacian corresponding to $d_A$
$$
-\Delta_A=d_A^*d_A+d_A d_A^*.
$$
This operator acts on $\C^n$-valued graded forms and maps $k$-forms to $k$-forms. The following result directly implies item (b) of Lemma~\ref{final2}.
\begin{Lemma}
Given $f,a\in C^\infty(M,\C^n)$ there is a smooth $\C^n$-valued 1-form $\beta$ with $d_A^*\beta=f$ and $d_A\beta=\Phi a\,\, d\Vol_g$.
\end{Lemma}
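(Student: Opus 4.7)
\emph{Proof plan.} The plan is to reduce the first-order system for $\beta$ to a coupled second-order elliptic boundary value problem via the potential ansatz $\beta = d_A p + \star d_A q$, where $p,q$ are unknown smooth $\C^n$-valued functions on $M$ subject to the Dirichlet conditions $p|_{\p M}=q|_{\p M}=0$. A direct computation using $d_A^*=-\star d_A\star$ on $1$-forms in two dimensions, $\star^2=\id$, $d_A^2=F_A\wedge(\cdot)$ with $F_A$ the curvature, and the identity $d_A\star d_A q=(\Delta_A q)\,d\Vol_g$ on $0$-forms (which follows from $d_A^*d_A=-\Delta_A$) shows
\begin{align*}
d_A^*\beta &= -\Delta_A p-(\star F_A)q,\\
\star d_A\beta &= (\star F_A) p+\Delta_A q.
\end{align*}
Hence the target equations $d_A^*\beta=f$ and $d_A\beta=\Phi a\, d\Vol_g$ are equivalent to the system $\mathcal L(p,q)=(f,\Phi a)$, where $\mathcal L$ has principal symbol $|\xi|^2\,\mathrm{diag}(1,-1)\otimes I_n$ and is therefore elliptic; Dirichlet boundary conditions satisfy Lopatinski--Shapiro, so $\mathcal L$ with this BC is Fredholm of index zero.

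It therefore suffices to establish injectivity. Supposing $\mathcal L(p,q)=0$ with $p,q\big|_{\p M}=0$, I would pair the first equation with $p$ and the second with $q$ in $L^2(M,\C^n)$; integrating by parts---with vanishing boundary terms by the Dirichlet BC---yields
$$
\|d_A p\|^2+\|d_A q\|^2 = \<(\star F_A) q,p\>+\<(\star F_A)p,q\>.
$$
Because $A$ is unitary, $F_A$ is $\mathfrak u(n)$-valued, so $\star F_A$ is pointwise skew-Hermitian; for any such matrix $M$ and any $u,v\in\C^n$ the identity $(Mu,v)=-\overline{(Mv,u)}$ forces the right-hand side above to be purely imaginary. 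Since the left-hand side is real and non-negative, both sides vanish, giving $d_A p=d_A q=0$. A parallel section of the trivial bundle $M\times\C^n$ vanishing on $\p M$ is identically zero on the connected manifold $M$ by uniqueness of parallel transport, hence $p=q=0$. Fredholmness of index zero combined with injectivity gives surjectivity, and elliptic regularity up to the boundary delivers smooth $p,q$, so that $\beta:=d_A p+\star d_A q$ is the desired smooth $1$-form.

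The main obstacle I anticipate is the energy identity: the argument hinges on the off-diagonal curvature coupling through $\star F_A$ contributing only imaginary pairings in the $L^2$ identity, and this is precisely where unitarity of $A$ is essential. Without skew-Hermiticity of $\star F_A$, the right-hand side could have a non-trivial real part and the direct pairing argument would fail to establish injectivity, forcing a more delicate alternative (e.g.\ patching two decoupled Dirichlet problems together or iterating). The remaining ingredients---ellipticity with covering Dirichlet BC, Fredholm index theory, and parallel transport uniqueness---are standard.
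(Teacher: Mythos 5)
Your ansatz $\beta=d_Ap+\star d_Aq$ with $p,q$ functions is, up to the identification $u^2=-q\,d\Vol_g$, the same decomposition $\beta=d_Au^0+d_A^*u^2$ that the paper uses, and reducing to a second-order elliptic system is the right idea. However, there is a sign error that is fatal to your injectivity step. On $1$-forms of an oriented Riemannian surface the Hodge star is rotation by $90^\circ$, so $\star^2=-\id$ there, not $+\id$ as you assert. Consequently
$$
d_A^*(\star d_Aq)=-\star d_A\star(\star d_Aq)=+\star d_Ad_Aq=+(\star F_A)q,
$$
and the correct system is $-\Delta_Ap+(\star F_A)q=f$, $(\star F_A)p+\Delta_Aq=\Phi a$; the off-diagonal coupling is $(+S,+S)$ with $S=\star F_A$ skew-Hermitian, rather than the $(-S,+S)$ you obtained. (You can cross-check against the paper's displayed equation $d_A^*d_Au^0+(F_A)^*u^2=f$: one has $(F_A)^*u^2=(\star F_A)^*(\star u^2)=(\star F_A)q$ with $q=-\star u^2$.) With the correct signs your energy identity becomes
$$
\|d_Ap\|^2+\|d_Aq\|^2=\langle Sp,q\rangle-\langle Sq,p\rangle=2\operatorname{Re}\langle Sp,q\rangle,
$$
whose right-hand side is real and in general nonzero, so the ``purely imaginary'' cancellation you rely on disappears. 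Injectivity of your Dirichlet problem then only follows under a smallness condition on $\|\star F_A\|_{L^\infty}$ relative to the Poincar\'e constant, not in general. Since this is exactly the step you yourself identify as the crux, the proof as written does not go through. (A lesser issue: Lopatinski--Shapiro gives Fredholmness but not index zero by itself; one deforms away the order-zero off-diagonal terms, a compact perturbation, to compute the index.)

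The paper avoids this difficulty entirely: it seeks $\beta=d_Au^0+d_A^*u^2$, rewrites the system as $(-\Delta_A+R)u=f+\Phi a\,d\Vol_g$ for the graded form $u=u^0+u^2$ with $R$ of order zero, and invokes \cite[Lemma~6.5]{PSU2}, which produces a smooth solution with \emph{no boundary condition imposed} (by extending to a larger manifold and using Fredholmness plus unique continuation to remove the finite-dimensional obstruction through a choice of extension of the data outside $M$). Because no boundary condition is prescribed, no injectivity statement for a particular boundary value problem is ever needed, and the curvature coupling is harmlessly absorbed into $R$. To repair your argument you would either have to drop the Dirichlet conditions and argue along those lines, or exhibit boundary conditions for which the correctly-signed coupled system is genuinely injective.
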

\begin{proof}
The proof is essentially identical to the proof of \cite[Lemma~6.6]{PSU2}. Look for $\beta$ of the form $\beta=d_A u^0+d_A^* u^2$ where $u^0,u^2$ are smooth forms. Then we need $u^0,u^2$ to satisfy
$$
d_A^* d_A u^0+(F_A)^* u^2=f,\quad F_A u^0+d_A d_A^* u^2=\Phi a\,\,d\Vol_g,
$$
where $F_A=d_A\circ d_A=dA+A\wedge A$ is the curvature of $d_A$. Writing $u=u^0+u^2$, these equations are equivalent to
$$
(-\Delta_A+R)u=f+\Phi a\,\,d\Vol_g,
$$
for some operator $R$ of order $0$. Then \cite[Lemma~6.5]{PSU2} implies the existence of a smooth solution $u$, hence the existence of the desired $\beta$.
\end{proof}

\subsection*{Acknowledgements.}
The first author wishes to thank his advisor, Gabriel Paternain, for all his encouragement and support, and Mikko Salo for various helpful comments which improved this article. The second author would like to express his acknowledgements to Professor Gunther Uhlmann, for constant assistance and support. The work of the second author was partially supported by NSF.

\end{document}